\newtheorem{theorem}{Theorem}[section]
\newtheorem{proposition}[theorem]{Proposition}
\newtheorem{corollary}[theorem]{Corollary}
\newtheorem{lemma}[theorem]{Lemma}
\theoremstyle{definition}
\newtheorem{remark}[theorem]{Remark}
\newtheorem{example}[theorem]{Example}
\begin{document}
\title[Exponential stability]{A complete characterization of exponential
stability for discrete dynamics}
\author[N. Lupa]{Nicolae Lupa}
\address{N. Lupa, Department of Mathematics, Politehnica University of
Timi\c soara, Victoriei Square 2, 300006 Timi\c soara, Romania}
\email{nicolae.lupa@upt.ro}
\author[L.H. Popescu]{Liviu Horia Popescu}
\address{L. H. Popescu, Department of Mathematics and Informatics, Faculty
of Sciences, University of Oradea, Universit\u a\c tii St. 1, 410087 Oradea,
Romania}
\email{lpopescu2002@yahoo.com}

\begin{abstract}
For a discrete dynamics defined by a sequence of bounded and
not necessarily invertible linear operators, we give a
complete characterization of exponential stability in terms of invertibility
of a certain operator acting on suitable Banach sequence spaces. We connect
the invertibility of this operator to the existence of a particular type of
admissible exponents. For the bounded orbits, exponential stability results
from a spectral property. Some adequate examples are presented to
emphasize some significant qualitative differences between uniform and
nonuniform behavior.
\end{abstract}

\subjclass[2010]{39A30, 37D25}
\keywords{Discrete dynamics, difference equations, exponential stability, Banach sequence spaces}
\maketitle

\section{Introduction}

Throughout our work we deal with nonautonomous linear difference equation
\begin{equation}
x_{n+1}=A_{n}x_{n},\;n\in\mathbb{N},   \label{eq.dif}
\end{equation}
where $\left( A_{n}\right) _{n\in\mathbb{N}}$ is a sequence of bounded
and not necessarily linear operators acting on a Banach space $E$.

There is an analogy between difference equations and differential equations,
via discretization method. To be more specific, a discrete time equation in
an infinite dimensional space typically occurs as a discretization of a
compact semigroup (cf. \cite[pp. 12]{Po.2010}). It is therefore natural to
consider the case when the operators $A_{n}$ are not necessarily invertible.
Furthermore, it is sometimes possible to apply some results in the discrete
case to the continuous situation. More precisely, in order to study the asymptotic behavior of a nonautonomous dynamical system modeled by an evolution family it suffices to characterize the asymptotic behavior of the associated
discrete dynamics.  On this matter we refer to \cite[Discrete
Dichotomy Theorem]{Ch.La.1999}, \cite[pp. 229-230, Exercise 10]{He}, \cite[Theorem 4.8]{Hu.Mi.2001}
and \cite[Lemma 3.1]{Sa.Sa} for the case of uniform exponential dichotomies, as well as
to \cite[Theorem 4.3]{Sa.Sa.tr} for exponential trichotomies. For useful and interesting
details on the theory of discrete dynamical systems, we refer the reader to
monographs \cite{Ag,El,Po.2010}. Some important results for discrete
dynamics are also obtained in \cite{Au.Mi,Na.Pi,Pham.Na,Pi}.

In the study of the asymptotic behavior of solutions of difference
equations, it is important to find conditions for the existence of
exponential stability, dichotomy or trichotomy of the associated dynamics.
In this regard one possible approach is to reduce the study of the qualitative behaviour of the
nonautonomous equation \eqref{eq.dif} to the analysis of the spectral properties of the operator $\mathbb{A}$ defined by
$$(\mathbb{A}x)_{n+1}=A_{n}x_{n},\;n\in\mathbb{N},\; x=(x_n)_{n\in\mathbb{N}}\in Y,$$
where $Y$ is some Banach sequence space with elements in $E$. Thus, this approach consists of passing from a nonautonomous dynamics to
an autonomous one defined on a
class of \textquotedblleft super\textquotedblright-spaces of $E$-valued
sequences, which is simpler to study (see for instance \cite{Bu,Ch.La.1999,Hu.Mi.2001}).
This idea goes back to the pioneering works of Perron~\cite{Pe.1930} in the case of ordinary differential equations and of Li~\cite{Li.1934} for discrete time systems.
A related tool
is represented by the input-output techniques, often called \emph{%
admissibility methods}. Such arguments involve the existence of some pairs
of Banach sequence spaces, and replace the study of the equation %
\eqref{eq.dif} to the analysis of the control system
\begin{equation*}
u_{n+1}=A_{n}u_{n}+f_{n+1},\;n\in\mathbb{N}.
\end{equation*}
For example in \cite{Sa.Sa} it is shown that the admissibility of the pair $%
\left( \ell^{p}(\mathbb{N},E),\ell_{0}^{q}(\mathbb{N},E)\right) $, $1\leq
p,\,q<\infty$, where
$$\ell_{0}^{q}(\mathbb{N},E)=\{x=(x_{n})_{n\in\mathbb{N}}\in\ell^{q}(\mathbb{N%
},E):\,x_{0}=0\},$$ implies the existence of uniform exponential dichotomy.

In recent years, a large number of papers study different  aspects of the dynamics of systems
with nonuniform exponential behavior.  A
motivation for introducing this type of behavior lies in the ergodic theory, almost all trajectories with nonzero Lyapunov exponents have a nonuniform exponential behavior. We refer the reader to the work of Barreira and Valls \cite{Ba.Va.2008-1} for details and references on the theory of
nonuniform exponential stability.
In \cite{Ba.Va.2016} the authors  showed that the
admissibility of the pair $\left( \ell_{\alpha}^{\infty}(\mathbb{N}%
,E),\ell_{\alpha}^{\infty}(\mathbb{N},E)\right) $ suffices for a discrete
dynamics to be nonuniform exponentially stable. Here $\ell_{\alpha}^{\infty
}(\mathbb{N},E)$ denotes the following Banach sequence space:
\begin{equation*}
\ell_{\alpha}^{\infty}(\mathbb{N},E)=\left\{ x=(x_{n})_{n\in\mathbb{N}%
}\subset E:\;\sup\limits_{n\in\mathbb{N}}\Vert
x_{n}\Vert_{n,\alpha}<\infty\right\} ,
\end{equation*}
where
\begin{equation*}
\Vert v\Vert_{n,\alpha}=\sup\limits_{m\geq n}e^{-\alpha(m-n)}\Vert
A(m,n)v\Vert,\;v\in E,
\end{equation*}
$n\in\mathbb{N}$ and $\alpha\in\mathbb{R}$ is such that there exist some
constants $M>0$ and $\varepsilon\geq0$ with $\Vert A(m,k)\Vert\leq
Me^{\varepsilon k}e^{\alpha(m-k)}$, for all $m\geq k$ in $\mathbb{N}$. For
related results in the case of nonuniform exponential dichotomy we refer the
reader to \cite{Ba.Dav}. Furthermore, in \cite{Ba.Dav.Va} the authors completely characterize the general notion of \emph{exponential dichotomy with respect to a sequence of norms} $\|\cdot\|_n$, $n\in\mathbb{Z}$, such that each norm  $\|\cdot\|_n$ is equivalent to the original norm on a Banach space $E$, in terms terms of  the admissibility of some pairs $(Y_1,Y_2)$ obtained from $(l^p(\mathbb{Z},E),l^q(\mathbb{Z},E))$, with $p$ and $q$ not necessarily equal.
Other recent important results in the theory of
nonuniform behavior for discrete dynamics are obtained in \cite%
{Be.Si,Chu,Po.Ce.Me,Po.Me.Ce,Zh}.

The main goal of our paper is to present a complete description of
nonuniform exponential stability of the discrete dynamics defined by Eq. %
\eqref{eq.dif}, in the spirit of the first method presented above. Let us
notice that the techniques used in the uniform case do not work in the
nonuniform setting, therefore new approaches must be identified. In this
framework, our constructions and arguments (needed in the case of nonuniform
behavior) are nontrivial.
To the best of our knowledge, the statement in
Theorem \ref{t1} gives a new result in the stability theory, and evidently
it is specific to nonuniform behavior.
A sufficient condition for the
existence of nonuniform exponential stability of all bounded orbits is also
established. Important, some discrete versions of well known results in the
stability theory of evolution equations are simple consequences of our
conclusions. 

Similar results for nonuniform hyperbolic sequences are obtained in the recent paper \cite{Ba.Po.Va}%
, that treats the case when the sequence $\mathcal{A}$ is $\mathbb{Z}$%
-indexed. The main result in \cite{Ba.Po.Va} tells that $\mathcal{A}=(A_n)_{n\in\mathbb{Z}}$ is hyperbolic with respect to an appropriate family of norms $\|\cdot\|_n$ if and only if the associated evolution map is hyperbolic.  When lecturing \cite{Ba.Po.Va} one may easily observe that the
path we follow is of a different type,  in \cite{Ba.Po.Va} an appropriate sequence of norms is used instead of the concept of admissible exponents. Let us also
point out that our paper
proves the necessity of the study of each individual admissible exponent
situation, as illustrated in Example \ref{ex.2}.

The results obtained in this paper are also related to the results obtained by Barreira and Valls \cite{Ba.Va.2016} and by Barreira, Dragi\v{c}evi\'{c} and Valls \cite{Ba.Dav.Va,Ba.Dav}, respectively in the case of nonuniform exponential stability and nonuniform hyperbolicity. In particular, if we consider the projections $P_n$ in \cite{Ba.Dav.Va,Ba.Dav} to be the identity, we have the  case of nonuniform exponential  stability. In this particular situation, Theorem 1 and Theorem  2 in \cite{Ba.Dav}, respectively  Theorem 4.1 and Theorem  5.2 in \cite{Ba.Dav.Va} are strongly related to Theorem \ref{th1} in the present paper. In spite of this, the characterizations are different, since  none of the results in \cite{Ba.Dav.Va,Ba.Dav,Ba.Va.2016} and in our paper imply the results in the other.

\section{Notions and preliminary results}

\label{sec.2}

As usually $\mathbb{N}$ denotes the set of all non-negative integers, and we
put $\mathbb{N}^{\ast}=\mathbb{N}\setminus\{0\}$. Let $\Delta=\{(m,n)\in
\mathbb{N}\times\mathbb{N}:\,m\geq n\}$ and $\Delta_{n}=\{m\in\mathbb{N}%
:\,(m,n)\in\Delta\}$, $n\in\mathbb{N}$. Given a sequence $\mathcal{A}=\left(
A_{n}\right) _{n\in\mathbb{N}}$ of bounded linear operators on a Banach
space $E$, for any $(m,n)\in\Delta$ we set
\begin{equation*}
A(m,n)=%
\begin{cases}
A_{m-1}\cdots A_{n}, & m>n, \\
\mathrm{Id}, & m=n.%
\end{cases}
\end{equation*}
The family of operators $\{A(m,n)\}_{(m,n)\in\Delta}$ is called the \emph{%
discrete evolution family} associated to $\mathcal{A}$. Notice that we do
not assume the invertibility condition for the operators $A_{n}$, thus our
results are applicable to a significantly large class of discrete dynamics.

For fixed $\alpha\in\mathbb{R}$ the operator sequence $\mathcal{A}$ is
called \emph{$\alpha$-nonuniform exponentially bounded,} if there exists a
(not necessarily bounded) positive real sequence $(M_{n})_{n\in\mathbb{N}}$
such that%
\begin{equation}
\left\Vert A(m,n)\right\Vert \leq M_{n}e^{\alpha(m-n)},\text{ for }%
(m,n)\in\Delta,   \label{eq1}
\end{equation}
in which case we call $\alpha$ an \emph{admissible exponent} for $\mathcal{A}
$. We denote $\mathcal{E}_{ad}\left( \mathcal{A}\right) $ the set of all
admissible exponents of $\mathcal{A}$. If there exists $\alpha\in \mathcal{E}%
_{ad}\left( \mathcal{A}\right) $ with $\alpha<0$, then $\mathcal{A}$ is
called \emph{$\alpha$-nonuniform exponentially stable}.

If $\mathcal{E}_{ad}\left( \mathcal{A}\right) \neq\emptyset$ we say that $%
\mathcal{A}$ is \emph{nonuniform exponentially bounded}, and if $\mathcal{A}$
has negative admissible exponents, then $\mathcal{A}$ is called \emph{%
nonuniform exponentially stable}.

In the above terminology, whenever there exists a \emph{bounded} sequence $%
(M_{n})_{n\in\mathbb{N}}$ satisfying \eqref{eq1}, we just replace the term
\textquotedblleft nonuniform\textquotedblright\ by \textquotedblleft
uniform\textquotedblright.

We specify that throughout our paper we always assume $\mathcal{E}%
_{ad}\left( \mathcal{A}\right) \neq\emptyset$. For any fixed $\alpha\in%
\mathcal{E}_{ad}\left( \mathcal{A}\right) $ and $n\in\mathbb{N}$, the below
mapping \cite[Eq. (6)]{Ba.Va.2016} is a norm on $E$:
\begin{equation*}
\Vert x\Vert_{n,\alpha}=\sup\limits_{m\in\Delta_{n}}e^{-\alpha(m-n)}\Vert
A(m,n)x\Vert,\;x\in E,
\end{equation*}
that satisfies
\begin{equation*}
\| x\|\leq \|x\|_{n,\alpha}\leq M_n\|x\|.
\end{equation*}

Let us denote
\begin{equation*}
c_{00}=\left\{ u=\left( u_{n}\right) _{n\in\mathbb{N}}\subset
E:\,\lim\limits_{n\rightarrow\infty}u_{n}=u_{0}=0\right\} ,
\end{equation*}
which is a Banach sequence space equipped with norm $\left\Vert u\right\Vert
_{\infty}=\sup\limits_{n\in\mathbb{N}}\Vert u_{n}\Vert$. For each $\alpha \in%
\mathcal{E}_{ad}\left( \mathcal{A}\right) $ we also denote%
\begin{equation*}
c_{00}(\alpha)=\left\{ u=(u_{n})_{n\in\mathbb{N}}\subset E\text{: }%
\lim\limits_{n\rightarrow\infty}\Vert u_{n}\Vert_{n,\alpha}=\Vert
u_{0}\Vert=0\right\} .
\end{equation*}
Standard arguments can show that $c_{00}(\alpha)$ is a Banach
sequence space endowed with norm
\begin{equation*}
\Vert u\Vert_{\alpha}=\sup\limits_{n\in\mathbb{N}}\Vert u_{n}\Vert_{n,\alpha
}.
\end{equation*}

\begin{remark}
\label{r1} We have

\begin{enumerate}
\item[(i)] $c_{00}(\alpha)\subset c_{00}$, $\alpha\in\mathcal{E}_{ad}\left(
\mathcal{A}\right) $;

\item[(ii)] if $\alpha\in\mathcal{E}_{ad}\left( \mathcal{A}\right) $ and $%
\beta\geq\alpha$, then $\beta\in\mathcal{E}_{ad}\left( \mathcal{A}\right) $
and $c_{00}(\alpha)\subset c_{00}(\beta)$.
\end{enumerate}
\end{remark}

For a nonautonomous dynamics $\mathcal{A}=\left( A_{n}\right) _{n\in \mathbb{%
N}}$, the \emph{evolution map} $T$ is defined by
\begin{equation*}
(Tu)_{n}=
\begin{cases}
A_{n-1}u_{n-1}, & n\geq1, \\
0, & n=0,%
\end{cases}
\end{equation*}
on the set of all $E$-valued sequences $u=(u_{n})_{n\in\mathbb{N}}$. For
each $\alpha\in\mathcal{E}_{ad}\left( \mathcal{A}\right) $, let $T_{\alpha}$
be the restriction of the operator $T$ to $c_{00}(\alpha)$.

\begin{proposition}
\label{lem1}The map $T_{\alpha}:c_{00}(\alpha)\rightarrow c_{00}(\alpha)$ is
a bounded linear operator on $c_{00}(\alpha)$.
\end{proposition}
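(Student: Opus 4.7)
The plan is to verify three things: linearity (trivial from the pointwise definition of $T$), well-definedness (i.e., $Tu \in c_{00}(\alpha)$ whenever $u \in c_{00}(\alpha)$), and boundedness. Both nontrivial steps will reduce to a single one-step estimate comparing $\|(Tu)_n\|_{n,\alpha}$ with $\|u_{n-1}\|_{n-1,\alpha}$, exploiting the cocycle identity $A(m,n)A_{n-1} = A(m,n-1)$ valid for $m \geq n \geq 1$.

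Concretely, fix $u \in c_{00}(\alpha)$. For $n \geq 1$, since $(Tu)_n = A_{n-1}u_{n-1}$, I would compute
\begin{equation*}
\|(Tu)_n\|_{n,\alpha} = \sup_{m \geq n} e^{-\alpha(m-n)}\|A(m,n)A_{n-1}u_{n-1}\| = \sup_{m \geq n} e^{-\alpha(m-n)}\|A(m,n-1)u_{n-1}\|.
\end{equation*}
Reindexing the exponential as $e^{-\alpha(m-n)} = e^{\alpha}e^{-\alpha(m-(n-1))}$ and enlarging the supremum from $m \geq n$ to $m \geq n-1$, I get
\begin{equation*}
\|(Tu)_n\|_{n,\alpha} \leq e^{\alpha}\sup_{m \geq n-1} e^{-\alpha(m-(n-1))}\|A(m,n-1)u_{n-1}\| = e^{\alpha}\|u_{n-1}\|_{n-1,\alpha}.
\end{equation*}

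From this single inequality both requirements follow immediately. Taking the supremum over $n \geq 1$ (and noting $(Tu)_0 = 0$) gives $\|Tu\|_\alpha \leq e^{\alpha}\|u\|_\alpha$, so $T_\alpha$ is bounded with $\|T_\alpha\| \leq e^{\alpha}$. Letting $n \to \infty$ in the estimate, and using $\lim_{n\to\infty}\|u_{n-1}\|_{n-1,\alpha} = 0$ because $u \in c_{00}(\alpha)$, yields $\lim_{n\to\infty}\|(Tu)_n\|_{n,\alpha} = 0$; combined with $(Tu)_0 = 0$, this confirms $Tu \in c_{00}(\alpha)$.

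There is no genuine obstacle here: the whole argument hinges on the cocycle identity and the monotonicity of enlarging the indexing set in the supremum defining $\|\cdot\|_{n,\alpha}$. The only subtlety worth highlighting is that, unlike the uniform case, the constant $e^{\alpha}$ does not depend on any nonuniformity data $M_n$, because the weighted norms $\|\cdot\|_{n,\alpha}$ already absorb the nonuniform estimates into their very definition — this is precisely the reason for introducing the spaces $c_{00}(\alpha)$ rather than working with $c_{00}$ directly.
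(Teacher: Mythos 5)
Your proposal is correct and follows essentially the same route as the paper: the single estimate $\Vert(T_{\alpha}u)_{n}\Vert_{n,\alpha}\leq e^{\alpha}\Vert u_{n-1}\Vert_{n-1,\alpha}$, obtained from the identity $A(m,n)A_{n-1}=A(m,n-1)$ and enlarging the supremum, is exactly the paper's argument, from which both $T_{\alpha}u\in c_{00}(\alpha)$ and $\Vert T_{\alpha}\Vert\leq e^{\alpha}$ follow. No issues.
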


\begin{proof}
Let $u\in c_{00}(\alpha)$. For every positive integer $n\in\mathbb{N}^{\ast}$
one has
\begin{align*}
\Vert(T_{\alpha}u)_{n}\Vert_{n,\alpha} &
=\sup\limits_{m\in\Delta_{n}}e^{-\alpha(m-n)}\Vert
A(m,n)(T_{\alpha}u)_{n}\Vert \\
& =\sup\limits_{m\in\Delta_{n}}e^{-\alpha(m-n)}\Vert A(m,n-1)u_{n-1}\Vert \\
& \leq e^{\alpha}\Vert u_{n-1}\Vert_{n-1,\alpha}.
\end{align*}
The above estimation immediately yields $T_{\alpha}u\in c_{00}(\alpha)$,
thus $T_{\alpha}:c_{00}(\alpha)\rightarrow c_{00}(\alpha)$ is well defined.
We also deduce that $\Vert T_{\alpha}u\Vert_{\alpha}\leq e^{\alpha}\Vert
u\Vert_{\alpha}$, consequently the linear operator $T_{\alpha}$ is bounded
on $c_{00}(\alpha)$, with $\left\Vert T_{\alpha}\right\Vert
_{c_{00}(\alpha)}\leq e^{\alpha}$.
\end{proof}

Let us put
\begin{equation*}
G_{\alpha}=T_{\alpha}-\mathrm{Id},\;\alpha\in\mathcal{E}_{ad}\left( \mathcal{%
A}\right) ,
\end{equation*}
that is $G_{\alpha}$ is the restriction of $G=T-\mathrm{Id}$ to $%
c_{00}(\alpha)$. We remark that the operator $G_{\alpha}$ plays in fact the
role of the infinitesimal generator of the evolution semigroup (see \cite[%
pp. 73]{Ch.La.1999} and \cite{Mi.Ra.Sc.1998}), in the case of nonuniformly
exponentially bounded discrete dynamics.

Next lemma is crucial in the sequel, and will be used latter to prove our
first main result.

\begin{lemma}
\label{lem2-1}Let $\alpha\in\mathcal{E}_{ad}(\mathcal{A})$.

\begin{enumerate}
\item[(i)] If $u,v\in c_{00}(\alpha)$, then $G_{\alpha}u=-v$ if and only if
the following identity holds:
\begin{equation}
u_{n}=\sum\limits_{k=0}^{n}A(n,k)v_{k},\;n\in\mathbb{N}.   \label{eq3}
\end{equation}

\item[(ii)] If $\alpha<0$, then for each $v\in c_{00}(\alpha)$, let $%
u=(u_{n})_{n\in\mathbb{N}}$ be the sequence defined by Eq. \eqref{eq3}. Then
$u\in c_{00}(\alpha)$ and $G_{\alpha}u=-v$.
\end{enumerate}
\end{lemma}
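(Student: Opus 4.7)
The plan is to handle (i) by unwinding the definition of $G_\alpha$ and then treat (ii) as the verification that the formula from (i) actually lands in $c_{00}(\alpha)$ when $\alpha<0$.

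For (i), I would simply rewrite $G_\alpha u=-v$ componentwise: at $n=0$ this forces $u_0=v_0=0$ (consistent since both are in $c_{00}(\alpha)$), and for $n\geq 1$ it reads $u_n=A_{n-1}u_{n-1}+v_n$. Then a one-line induction on $n$ gives $u_n=\sum_{k=0}^{n}A(n,k)v_k$, using the cocycle identity $A(n,k)=A_{n-1}A(n-1,k)$ for $k<n$ together with $A(n,n)=\mathrm{Id}$. The converse direction is immediate from the same recursion.

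For (ii), the identity $G_\alpha u=-v$ will follow automatically from (i) once I verify $u\in c_{00}(\alpha)$, so the real content is showing $\|u_n\|_{n,\alpha}\to 0$ (the equality $u_0=v_0=0$ is trivial). The key computation is to use the cocycle property inside the norm $\|\cdot\|_{n,\alpha}$: for $m\geq n$,
\begin{equation*}
A(m,n)u_n=A(m,n)\sum_{k=0}^{n}A(n,k)v_k=\sum_{k=0}^{n}A(m,k)v_k,
\end{equation*}
so that $\|A(m,k)v_k\|\leq e^{\alpha(m-k)}\|v_k\|_{k,\alpha}$ (which is valid since $m\geq n\geq k$, so $m\in\Delta_k$). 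Multiplying by $e^{-\alpha(m-n)}$ and taking the supremum over $m$, the dependence on $m$ cancels, yielding the discrete convolution bound
\begin{equation*}
\|u_n\|_{n,\alpha}\leq \sum_{k=0}^{n}e^{\alpha(n-k)}\|v_k\|_{k,\alpha}.
\end{equation*}

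The main obstacle is now showing this convolution tends to zero, and I expect this to be the heart of the lemma. I would use the standard splitting argument. Given $\varepsilon>0$, choose $N$ so that $\|v_k\|_{k,\alpha}<\varepsilon$ for $k\geq N$. Split the sum at $k=N$: the tail $\sum_{k=N}^{n}e^{\alpha(n-k)}\|v_k\|_{k,\alpha}$ is bounded by $\varepsilon\sum_{j\geq 0}e^{\alpha j}=\varepsilon/(1-e^\alpha)$, using $\alpha<0$; the head $\sum_{k=0}^{N-1}e^{\alpha(n-k)}\|v_k\|_{k,\alpha}$ is at most $e^{\alpha(n-N+1)}\sum_{k=0}^{N-1}\|v_k\|_{k,\alpha}$, which tends to $0$ as $n\to\infty$ because $\alpha<0$ and the sum is a fixed finite quantity. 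Hence $\limsup_n\|u_n\|_{n,\alpha}\leq\varepsilon/(1-e^\alpha)$ for every $\varepsilon>0$, giving $\|u_n\|_{n,\alpha}\to 0$ and therefore $u\in c_{00}(\alpha)$. The hypothesis $\alpha<0$ is used in exactly two places (summability of the geometric tail and decay of the finite head), which is why the statement cannot avoid it.
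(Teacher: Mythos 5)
Your proposal is correct and follows essentially the same route as the paper: part (i) by unwinding the recursion $u_n=A_{n-1}u_{n-1}+v_n$ (the paper phrases this as a telescoping identity in powers of $T_\alpha$, but it is the same computation), and part (ii) via the identical convolution bound $\Vert u_n\Vert_{n,\alpha}\leq\sum_{k=0}^{n}e^{\alpha(n-k)}\Vert v_k\Vert_{k,\alpha}$. The only cosmetic difference is that you prove the convolution-tends-to-zero step inline by the head/tail splitting, whereas the paper delegates exactly that argument to its Appendix Lemma \ref{l1.A}.
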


\begin{proof}
(i) \emph{Necessity}. Assume that $u,v\in c_{00}(\alpha)$ with $G_{\alpha
}u=-v$. Evidently \eqref{eq3} holds for $n=0$. If $n\in\mathbb{N}^{\ast}$ we
successively have
\begin{equation*}
T_{\alpha}^{n}u-u=\sum\limits_{k=0}^{n-1}\left( T_{\alpha}^{k+1}-T_{\alpha
}^{k}\right) u=\sum\limits_{k=0}^{n-1}T_{\alpha}^{k}G_{\alpha}u=-\sum
\limits_{k=0}^{n-1}T_{\alpha}^{k}v,
\end{equation*}
therefore
\begin{equation*}
u_{n}=\sum\limits_{k=0}^{n}(T_{\alpha}^{k}v)_{n}=\sum%
\limits_{k=0}^{n}A(n,n-k)v_{n-k}=\sum\limits_{k=0}^{n}A(n,k)v_{k}.
\end{equation*}
\emph{Sufficiency}. If \eqref{eq3} holds, then for each $n\in\mathbb{N}%
^{\ast }$ one has
\begin{align*}
(G_{\alpha}u)_{n} & =A_{n-1}u_{n-1}-u_{n} \\
&
=A(n,n-1)\sum\limits_{k=0}^{n-1}A(n-1,k)v_{k}-\sum%
\limits_{k=0}^{n}A(n,k)v_{k} \\
&
=\sum\limits_{k=0}^{n-1}A(n,k)v_{k}-\sum\limits_{k=0}^{n}A(n,k)v_{k}=-v_{n}.
\end{align*}
As the above relation also holds for $n=0$, we conclude that $G_{\alpha}u=-v$%
.

(ii) Choose $v\in c_{00}(\alpha)$ and consider $u=(u_{n})_{n\in\mathbb{N}}$
defined by Eq. \eqref{eq3}. For $n\in\mathbb{N}$, using \eqref{eq3}, one
gets
\begin{align*}
\Vert u_{n}\Vert_{n,\alpha} & =\sup\limits_{m\in\Delta_{n}}e^{-\alpha
(m-n)}\Vert A(m,n)u_{n}\Vert \\
& \leq\sup\limits_{m\in\Delta_{n}}e^{-\alpha(m-n)}\sum\limits_{k=0}^{n}\Vert
A(m,k)v_{k}\Vert \\
&= \sum\limits_{k=0}^{n}e^{\alpha(n-k)}\sup\limits_{m\in\Delta_{n}}e^{-%
\alpha(m-k)}\Vert A(m,k)v_{k}\Vert \\
&\leq\sum\limits_{k=0}^{n}e^{\alpha(n-k)}\sup\limits_{m\in\Delta_{k}}e^{-%
\alpha(m-k)}\Vert A(m,k)v_{k}\Vert \\
& =\sum\limits_{k=0}^{n}e^{\alpha(n-k)}\Vert v_{k}\Vert_{k,\alpha}.
\end{align*}
As for $\alpha<0$ the series $\sum\limits_{n=0}^{\infty}e^{\alpha n}$ is
convergent and $\lim\limits_{n\rightarrow\infty}\Vert v_{n}\Vert_{n,\alpha}=0
$, Lemma \ref{l1.A} readily implies $\lim\limits_{n\rightarrow\infty}\Vert
u_{n}\Vert_{n,\alpha}=0$, thus $u\in c_{00}(\alpha)$. The second statement
is a direct consequence of (i).
\end{proof}

\section{Nonuniform exponential stability and invertibility of the operators
$G_{\protect\alpha}$}

\label{sec.3}

In this section we completely characterize the existence of nonuniform
exponential stability for a discrete dynamics $\mathcal{A}$, in terms of
invertibility of the operators $G_{\alpha}$.

We consider next theorem to be one of the main results of the paper. It is
in fact the correspondent of Theorem 2.2 in \cite{Mi.Ra.Sc.1998}, for a
nonuniform exponentially bounded discrete dynamics. Let us emphasize that
the time-dependence of the intervening constants requires completely
different techniques.

\begin{theorem}
\label{th1}The sequence $\mathcal{A}$ is nonuniform exponentially stable if
and only if there exists an admissible exponent $\alpha\in\mathcal{E}_{ad}(%
\mathcal{A})$, for which the operator $G_{\alpha}$ is invertible on $%
c_{00}(\alpha)$.
\end{theorem}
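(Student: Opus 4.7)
The plan is to handle the two directions separately, with essentially all work concentrated on sufficiency. Necessity is almost immediate from Lemma \ref{lem2-1}: given nonuniform exponential stability I pick $\alpha\in\mathcal{E}_{ad}(\mathcal{A})$ with $\alpha<0$, so part (ii) produces a preimage in $c_{00}(\alpha)$ for every $-v$ (surjectivity of $G_\alpha$), while injectivity reads off from part (i) by setting $v=0$ in \eqref{eq3}; the open mapping theorem then yields a bounded inverse.

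For sufficiency, assume $G_\alpha$ is invertible and set $C:=\|G_\alpha^{-1}\|$. I plan to extract exponential decay in three stages, each driven by a carefully chosen test element of $c_{00}(\alpha)$. \emph{Stage 1 (uniform bound).} For fixed $n_0\geq 1$ and $x_0\in E$, the singleton $v$ with $v_{n_0}=x_0$ and $v_k=0$ otherwise lies in $c_{00}(\alpha)$; by invertibility there is a unique $u\in c_{00}(\alpha)$ with $G_\alpha u=-v$, Lemma \ref{lem2-1}(i) identifies $u_n=A(n,n_0)x_0$ for $n\geq n_0$ (and zero for $n<n_0$), and $\|u\|_\alpha\leq C\|v\|_\alpha$ delivers
\begin{equation*}
\sup_{n\geq n_0}\|A(n,n_0)x_0\|_{n,\alpha}\leq C\|x_0\|_{n_0,\alpha};
\end{equation*}
call this $(\star)$. \emph{Stage 2 (polynomial decay).} For $N>n_0$ I now feed in the finite trajectory $v_k=A(k,n_0)x_0$ on $n_0\leq k\leq N$ and zero elsewhere; $(\star)$ keeps $\|v\|_\alpha\leq C\|x_0\|_{n_0,\alpha}$ uniformly in $N$, while the cocycle identity collapses \eqref{eq3} at $n=N$ to $u_N=(N-n_0+1)\,A(N,n_0)x_0$. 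Hence $\|u_N\|_{N,\alpha}\leq\|u\|_\alpha\leq C\|v\|_\alpha$, and dividing gives
\begin{equation*}
\|A(N,n_0)x_0\|_{N,\alpha}\leq\frac{C^2}{N-n_0+1}\,\|x_0\|_{n_0,\alpha}.
\end{equation*}
\emph{Stage 3 (exponential decay).} I fix $N_1$ with $C^2/(N_1+1)\leq 1/2$ and iterate Stage 2 along the progression $n_0,\,n_0+N_1,\,n_0+2N_1,\ldots$, restarting each step from $A(n_0+kN_1,n_0)x_0$ as the new initial condition; induction yields $\|A(n_0+kN_1,n_0)x_0\|_{n_0+kN_1,\alpha}\leq 2^{-k}\|x_0\|_{n_0,\alpha}$, and one final invocation of $(\star)$ interpolates to arbitrary $m\geq n_0$, producing $\|A(m,n_0)x_0\|_{m,\alpha}\leq 2Ce^{\beta(m-n_0)}\|x_0\|_{n_0,\alpha}$ with $\beta:=-(\ln 2)/N_1<0$. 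The sandwich $\|\cdot\|\leq\|\cdot\|_{n,\alpha}\leq M_n\|\cdot\|$ then certifies $\beta$ as a negative admissible exponent (the case $n_0=0$ is absorbed into $M_0$ via $A(m,0)=A(m,1)A_0$).

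The pivotal obstacle is the choice of test sequence in Stage 2: it is not obvious a priori that feeding a solution segment back through $G_\alpha^{-1}$ should help, and the crucial observation is that $(\star)$ from Stage 1 controls its $\|\cdot\|_\alpha$-norm uniformly in the length $N$, which is precisely what converts the telescoping sum into a factor of $N-n_0+1$ on the left while keeping the right-hand side bounded.
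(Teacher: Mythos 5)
Your proof is correct, and while your necessity argument coincides with the paper's (both read injectivity and surjectivity of $G_\alpha$ off Lemma \ref{lem2-1}, parts (i) and (ii)), your sufficiency argument takes a genuinely different route. The paper also starts by feeding test sequences through $G_\alpha^{-1}$, but its inputs are the geometrically weighted orbit $e^{-\alpha(k-n)}A(k,n)x$ (Step 1) and then the polynomially weighted orbits $(i-n)^kA(i,n)x$ (Step 2); an induction on $k$ combined with Faulhaber's inequality yields $\Vert A(m,n)\Vert\le[c_\alpha(e^\alpha-1)+1]\,c_\alpha^k k!\,(m-n)^{-k}M_n$ for every $k$, and summing this whole family of bounds against $2^{-k}$ produces the exponential factor $e^{-(m-n)/(2c_\alpha)}$ in one stroke. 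You instead stop at the single estimate $\Vert A(N,n_0)x_0\Vert_{N,\alpha}\le C^2(N-n_0+1)^{-1}\Vert x_0\Vert_{n_0,\alpha}$ (your Stage 2, with the unweighted orbit segment as input) and then run the classical Datko-type iteration over windows of fixed length $N_1\approx 2C^2$ to upgrade $O(1/N)$ decay to exponential decay. Both arguments rest on the same mechanism --- invertibility turns the telescoped sum \eqref{eq3} into a growth constraint on the orbit --- and share the decisive idea of working entirely in the norms $\Vert\cdot\Vert_{n,\alpha}$, where the time-dependence of $M_n$ disappears, returning to the original norm only at the very end. Your route is somewhat more elementary (no induction on $k$, no Faulhaber, and no case split between $\alpha=0$ and $\alpha>0$), while the paper's yields a cleaner explicit decay rate $1/(2c_\alpha)$, linear rather than quadratic in $1/\Vert G_\alpha^{-1}\Vert$. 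Two small points to make explicit in a final write-up: $C=\Vert G_\alpha^{-1}\Vert\ge1$ (so your $(\star)$ indeed covers the term $k=n_0$ of the Stage-2 input), and the restriction $n_0\ge1$ --- needed so the test sequences have vanishing zeroth entry and hence lie in $c_{00}(\alpha)$ --- is harmless, exactly as you absorb it at the end via $A(m,0)=A(m,1)A_0$.
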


\begin{proof}
Necessity. Assume that $\mathcal{A}$ is $\alpha$-nonuniform exponentially
stable. Lemma \ref{lem2-1} (i) readily implies that $G_{\alpha}$ is a
one-to-one operator, then as $\alpha<0$, Lemma \ref{lem2-1} (ii) ensures
that $G_{\alpha}:c_{00}(\alpha)\rightarrow c_{00}(\alpha)$ is onto.

Sufficiency. If the operator $G_{\alpha}$ is invertible on $c_{00}(\alpha)$
for some $\alpha\in\mathcal{E}_{ad}(\mathcal{A})$, then evidently it
suffices to analyze only the case $\alpha\geq0$. As $G_{\alpha}^{-1}$ is
bounded on $c_{00}(\alpha)$, let us denote $c_{\alpha}=\Vert
G_{\alpha}^{-1}\Vert>0$. We divide our proof into three steps.

\textbf{Step 1}. The following inequality holds:
\begin{equation}
\Vert A(m,n)\Vert\leq\left[ c_{\alpha}(e^{\alpha}-1)+1\right] M_{n},\text{
for }(m,n)\in\Delta.   \label{eq4}
\end{equation}

For $\alpha=0$, estimation \eqref{eq4} is a simple consequence of Eq. %
\eqref{eq1}. Pick $\alpha\in\mathcal{E}_{ad}(\mathcal{A})$ with $\alpha>0$.
If $m>n$ are fixed, arbitrary non-negative integers and $x\in E$, we define
the sequence $v=(v_{k})_{k\in\mathbb{N}}$ (depending on $\alpha$, $m$, $n$, $%
x$) by
\begin{equation*}
v_{k}=%
\begin{cases}
e^{-\alpha(k-n)}A(k,n)x, & k\in\{n+1,\ldots,m\}, \\
0, & \text{otherwise},%
\end{cases}
\end{equation*}
and if $m=n$, we put $v_{k}=0$ for all $k\in\mathbb{N}$. Let us remark that $%
v\in c_{00}(\alpha)$. If $k\notin\{n+1,\ldots,m\}$, then $\Vert
v_{k}\Vert_{k,\alpha}=0$. For $k\in\{n+1,\ldots,m\}$ we have
\begin{equation*}
\Vert
v_{k}\Vert_{k,\alpha}=\sup\limits_{i\in\Delta_{k}}e^{-\alpha(i-k)}\Vert
A(i,k)v_{k}\Vert=\sup\limits_{i\in\Delta_{k}}e^{-\alpha(i-n)}\Vert
A(i,n)x\Vert\leq M_{n}\Vert x\Vert,
\end{equation*}
consequently
\begin{equation*}
\label{eq.prel}\Vert v\Vert_{\alpha}\leq M_{n}\Vert x\Vert.
\end{equation*}
Putting $u=G_{\alpha}^{-1}(-v)\in c_{00}(\alpha)$, from identity \eqref{eq3}
one gets
\begin{equation*}
u_{m}=\sum\limits_{k=0}^{m}A(m,k)v_{k}=\sum\limits_{k=n+1}^{m}e^{-\alpha
(k-n)}A(m,n)x=\dfrac{1-e^{-\alpha(m-n)}}{e^{\alpha}-1}\,A(m,n)x,
\end{equation*}
which implies
\begin{equation*}
\dfrac{1}{e^{\alpha}-1}A(m,n)x=u_{m}+\dfrac{1}{e^{\alpha}-1}\,e^{-\alpha
(m-n)}A(m,n)x.
\end{equation*}
This yields
\begin{align*}
\dfrac{1}{e^{\alpha}-1}\Vert A(m,n)x\Vert & \leq\Vert u_{m}\Vert+\dfrac {1}{%
e^{\alpha}-1}\,e^{-\alpha(m-n)}\Vert A(m,n)x\Vert \\
& \leq\Vert u\Vert_{\alpha}+\dfrac{1}{e^{\alpha}-1}M_{n}\Vert x\Vert \\
& \leq c_{\alpha}\Vert v\Vert_{\alpha}+\dfrac{1}{e^{\alpha}-1}M_{n}\Vert
x\Vert \\
& \leq\left( c_{\alpha}+\dfrac{1}{e^{\alpha}-1}\right) M_{n}\Vert x\Vert,
\end{align*}
which proves (\ref{eq4}).

\textbf{Step 2}. We prove that
\begin{equation}
\Vert A(m,n)\Vert\leq\lbrack c_{\alpha}(e^{\alpha}-1)+1]\,\dfrac{c_{\alpha
}^{k}\,k!}{(m-n)^{k}}\,M_{n},\text{ for }m>n\text{ in }\mathbb{N},
\label{eq5}
\end{equation}
and for all $k\in\mathbb{N}$.

Step 1 shows that \eqref{eq5} is valid for $k=0$. Assume now that \eqref{eq5}
holds for some $k\in\mathbb{N}$. For fixed non-negative integers $m>n$ and $%
x\in E$ we define a sequence $v=(v_{i})_{i\in\mathbb{N}}$ by
\begin{equation*}
v_{i}=%
\begin{cases}
(i-n)^{k}A(i,n)x, & i\in\{n+1,\ldots,m\}, \\
0, & \text{otherwise}.%
\end{cases}
\end{equation*}
Notice that $v\in c_{00}(\alpha)$. For $i\in\{n+1,\ldots,m\}$ one gets
\begin{equation*}
\Vert
v_{i}\Vert_{i,\alpha}=\sup\limits_{j\in\Delta_{i}}e^{-\alpha(j-i)}\Vert
A(j,i)v_{i}\Vert=\sup\limits_{j\in\Delta_{i}}e^{-\alpha(j-i)}(i-n)^{k}\Vert
A(j,n)x\Vert.
\end{equation*}
Using the induction step we have
\begin{equation*}
\Vert v_{i}\Vert_{i,\alpha}\leq\sup\limits_{j\in\Delta_{n}}(j-n)^{k}\Vert
A(j,n)x\Vert\leq\lbrack
c_{\alpha}(e^{\alpha}-1)+1]\,c_{\alpha}^{k}\,k!\,M_{n}\Vert x\Vert.
\end{equation*}
If $i\in\{0,\ldots,n\}$ or $i\geq m+1$, then $\Vert v_{i}\Vert_{i,\alpha}=0$%
, therefore
\begin{equation*}
\label{eq.t1-1}\Vert v\Vert_{\alpha}\leq\lbrack c_{\alpha}(e^{\alpha
}-1)+1]\,c_{\alpha}^{k}\,k!\,M_{n}\Vert x\Vert.
\end{equation*}
Put $u=G_{\alpha}^{-1}(-v)\in c_{00}(\alpha)$. Lemma \ref{lem2-1} (i)
implies
\begin{equation*}
u_{m}=\sum\limits_{i=0}^{m}A(m,i)v_{i}=\left(
\sum\limits_{i=n+1}^{m}(i-n)^{k}\right) A(m,n)x=\left(
\sum\limits_{i=1}^{m-n}i^{k}\right) A(m,n)x,
\end{equation*}
and from inequality \eqref{eq.c} in Appendix \ref{ap.2} we obtain
\begin{equation*}
\label{eq.t1-2}\Vert u_{m}\Vert\geq\dfrac{(m-n)^{k+1}}{k+1}\,\Vert
A(m,n)x\Vert.
\end{equation*}
Estimating%
\begin{equation*}
\dfrac{(m-n)^{k+1}}{k+1}\,\Vert A(m,n)x\Vert\leq\Vert u\Vert_{\alpha}\leq
c_{\alpha}\Vert v\Vert_{\alpha}\leq\lbrack
c_{\alpha}(e^{\alpha}-1)+1]\,c_{\alpha}^{k+1}\,k!\,M_{n}\Vert x\Vert,
\end{equation*}
we conclude that \eqref{eq5} holds for $k+1$.

\textbf{Step 3}. We show that%
\begin{equation}
\Vert A(m,n)\Vert\leq2[c_{\alpha}(e^{\alpha}-1)+1]M_{n}e^{-\frac{1}{%
2c_{\alpha}}(m-n)},\text{ for }(m,n)\in\Delta.   \label{eq6}
\end{equation}

If $(m,n)\in\Delta$, from Step 2 one gets
\begin{equation*}
\left( \dfrac{m-n}{c_{\alpha}}\right) ^{k}\dfrac{1}{k!}\,\Vert
A(m,n)\Vert\leq\lbrack c_{\alpha}(e^{\alpha}-1)+1]\,M_{n},\;k\in\mathbb{N}.
\end{equation*}
Multiplying the above inequality by $\dfrac{1}{2^{k}}$ and summing over $k$
we obtain estimation \eqref{eq6}, thus $\mathcal{A}$ is nonuniform
exponentially stable.
\end{proof}

The sufficiency of Theorem \ref{th1} furnishes a simple condition for the
existence of nonuniform exponential stability in the case of discrete
dynamics. Recall that the \emph{spectral radius} of a bounded linear
operator $T:E\rightarrow E$ is defined by
\begin{equation*}
r\left( T\right) =\lim\limits_{n\rightarrow\infty}\sqrt[n]{\left\Vert
T^{n}\right\Vert }.
\end{equation*}

\begin{corollary}
\label{cor.suf}If for some $\alpha\in\mathcal{E}_{ad}(\mathcal{A})$ the
spectral radius $r(T_{\alpha})<1$, then the sequence $\mathcal{A}$ is
nonuniform exponentially stable.
\end{corollary}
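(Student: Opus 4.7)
The plan is to reduce the corollary to Theorem \ref{th1} by showing that the spectral assumption $r(T_\alpha)<1$ forces the invertibility of $G_\alpha$ on $c_{00}(\alpha)$. Since $G_\alpha = T_\alpha - \mathrm{Id} = -(\mathrm{Id} - T_\alpha)$, it suffices to show that $\mathrm{Id}-T_\alpha$ is invertible on the Banach space $c_{00}(\alpha)$.

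First I would invoke the standard Neumann-series criterion: for any bounded operator $S$ on a Banach space, $r(S)<1$ implies that $\mathrm{Id}-S$ is invertible, with inverse given by the convergent series $\sum_{k\ge 0} S^k$. Applying this to $S=T_\alpha$, which is bounded on $c_{00}(\alpha)$ by Proposition \ref{lem1}, I conclude that $(\mathrm{Id}-T_\alpha)^{-1}$ exists as a bounded operator on $c_{00}(\alpha)$, and hence so does $G_\alpha^{-1} = -(\mathrm{Id}-T_\alpha)^{-1}$.

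Once invertibility of $G_\alpha$ is established, the conclusion follows immediately from the sufficiency direction of Theorem \ref{th1}: the existence of some $\alpha\in\mathcal{E}_{ad}(\mathcal{A})$ for which $G_\alpha$ is invertible on $c_{00}(\alpha)$ guarantees that $\mathcal{A}$ is nonuniform exponentially stable.

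There is essentially no obstacle here; the only points to verify are that $T_\alpha$ acts as a bounded operator on the Banach space $c_{00}(\alpha)$ (provided by Proposition \ref{lem1}) and that the Neumann series argument applies in this setting, which it does because $c_{00}(\alpha)$ equipped with $\|\cdot\|_\alpha$ is a Banach space. The corollary is therefore a short consequence of the main theorem and the classical spectral radius formula.
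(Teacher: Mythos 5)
Your proof is correct and is exactly the argument the paper intends: the paper leaves the corollary as an immediate consequence of Theorem \ref{th1}, with the spectral hypothesis $r(T_\alpha)<1$ giving $1\notin\sigma(T_\alpha)$, hence invertibility of $G_\alpha=T_\alpha-\mathrm{Id}$ on the Banach space $c_{00}(\alpha)$ via the Neumann series, and then the sufficiency part of Theorem \ref{th1} applies. No gaps; nothing further is needed.
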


Our second main result below offers a necessary and sufficient condition for
the invertibility of each operator $G_{\alpha}$, $\alpha\in\mathcal{E}_{ad}(%
\mathcal{A})$. At our best knowledge this result is quite new, being
specific to nonuniform asymptotic behavior.

\begin{theorem}
\label{t1}Let $\alpha\in\mathcal{E}_{ad}(\mathcal{A})$. The operator $%
G_{\alpha}$ is invertible on $c_{00}(\alpha)$ if and only if there exists $%
\beta<0$, $\beta\in\mathcal{E}_{ad}(\mathcal{A})$, such that $%
c_{00}(\alpha)=c_{00}(\beta)$.
\end{theorem}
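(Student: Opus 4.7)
\emph{Plan.} I would handle the two implications separately. For sufficiency, if such a $\beta<0$ exists then $\mathcal{A}$ is $\beta$-nonuniform exponentially stable and Theorem~\ref{th1} (applied at $\beta$) gives that $G_\beta$ is invertible on $c_{00}(\beta)$. The pointwise monotonicity $\gamma\mapsto\|\cdot\|_{n,\gamma}$ (which is decreasing) makes one of the Banach spaces $(c_{00}(\alpha),\|\cdot\|_\alpha)$, $(c_{00}(\beta),\|\cdot\|_\beta)$ embed continuously into the other; combined with the hypothesized set equality, the open mapping theorem forces the norms $\|\cdot\|_\alpha$ and $\|\cdot\|_\beta$ to be equivalent. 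Since $G_\alpha$ and $G_\beta$ are the same restriction of $T-\mathrm{Id}$, invertibility transfers to $G_\alpha$.

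For necessity, write $c_\alpha:=\|G_\alpha^{-1}\|$ and reduce to $\alpha\ge 0$ (if $\alpha<0$ take $\beta=\alpha$). My strategy is to rerun Steps~1--3 of the proof of Theorem~\ref{th1} using the intrinsic norms $\|\cdot\|_{n,\alpha}$ in place of $\|\cdot\|$, the decisive benefit being that the nonuniform factor $M_n$ gets replaced by the uniform constant $c_\alpha$. For the ``Step~1'' base case I test $G_\alpha^{-1}$ against $v_i=e^{-\alpha(i-n)}A(i,n)x$ on $i\in(n,m]$ and zero otherwise; the elementary bound $\|A(i,n)x\|_{i,\alpha}\le e^{\alpha(i-n)}\|x\|_{n,\alpha}$ (immediate from the definition of $\|\cdot\|_{i,\alpha}$) yields $\|v\|_\alpha\le\|x\|_{n,\alpha}$ without any $M_n$, and Lemma~\ref{lem2-1}(i) then produces a uniform bound $\|A(m,n)x\|_{m,\alpha}\le D\|x\|_{n,\alpha}$ for some constant $D=D(c_\alpha,\alpha)$. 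The inductive ``Step~2'' carries over verbatim: with test sequences $v_i=(i-n)^k A(i,n)x$, the inductive hypothesis bounds $\|v\|_\alpha\le D c_\alpha^{k}k!\|x\|_{n,\alpha}$, and the identity $u_m=\bigl(\sum_{j=1}^{m-n}j^k\bigr)A(m,n)x$ combined with $\sum j^k\ge(m-n)^{k+1}/(k+1)$ propagates the bound to $\|A(m,n)x\|_{m,\alpha}\le D c_\alpha^{k}k!/(m-n)^{k}\,\|x\|_{n,\alpha}$ for all $k\ge 0$. Summing these with weights $2^{-k}$ yields the exponential decay $\|A(m,n)x\|_{m,\alpha}\le 2D\,e^{\beta(m-n)}\|x\|_{n,\alpha}$ with $\beta:=-1/(2c_\alpha)<0$.

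From this last estimate I would read off two consequences. First, since $\|A(m,n)x\|\le\|A(m,n)x\|_{m,\alpha}\le 2D M_n e^{\beta(m-n)}\|x\|$, the exponent $\beta$ is admissible. Second, directly from the definition of $\|\cdot\|_{n,\beta}$, one has $\|x\|_{n,\beta}\le 2D\,\|x\|_{n,\alpha}$ uniformly in $n$ and $x$. Combined with the trivial reverse inequality $\|x\|_{n,\alpha}\le\|x\|_{n,\beta}$ (which holds because $\beta\le\alpha$), this gives uniform equivalence of the families $\{\|\cdot\|_{n,\alpha}\}$ and $\{\|\cdot\|_{n,\beta}\}$, whence $c_{00}(\alpha)=c_{00}(\beta)$.

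The main obstacle is precisely this upgrade to uniform constants. A naive use of only the operator-norm bound $\|A(m,n)\|\le CM_n e^{\beta(m-n)}$ coming from Step~3 of Theorem~\ref{th1} yields $\|x\|_{n,\beta}\le CM_n\|x\|$, which is insufficient to force $\|u_n\|_{n,\beta}\to 0$ for an arbitrary $u\in c_{00}(\alpha)$, since $M_n\|u_n\|$ need not vanish. Running the entire induction \emph{inside} the intrinsic norms $\|\cdot\|_{n,\alpha}$ is what trades the potentially unbounded $M_n$ for the uniform $c_\alpha$, and this is the point at which the nonuniform nature of the problem genuinely enters.
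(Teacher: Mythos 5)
Your argument is correct, and the interesting half (necessity) follows a genuinely different route from the paper. The paper first invokes Theorem~\ref{th1} to get some negative admissible exponent, then introduces for each candidate $\beta\in(-1,0)$ and each $(n,x)$ the ``peak index'' $p(\beta,n,x)$ at which $\sup_k\|\theta_k\|_{k,\alpha}$ is attained for the test sequence $\theta_k=e^{-\beta(k-n)}A(k,n)x$, and proves by contradiction (running $\beta_k=-1/k$ and exhibiting quotients $\|G_\alpha^{-1}v\|_\alpha/\|v\|_\alpha>k(1-e^{-1})$) that for some $\beta$ the peak always lies within $-1/\beta$ steps of $n$; this localization yields the uniform comparison $\|u_n\|_{n,\beta}\le e^{-(\alpha-\beta)/\beta}\|u_n\|_{n,\alpha}$ and hence $c_{00}(\alpha)\subset c_{00}(\beta)$. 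You instead rerun the Datko-type iteration of Theorem~\ref{th1} measured in the intrinsic norms $\|\cdot\|_{n,\alpha}$: the elementary cocycle bound $\|A(i,n)x\|_{i,\alpha}\le e^{\alpha(i-n)}\|x\|_{n,\alpha}$ lets the same test sequences produce $\|v\|_\alpha\le\|x\|_{n,\alpha}$ (and its $k$-th power analogue) with no $M_n$, so the induction closes with the uniform constant $c_\alpha=\|G_\alpha^{-1}\|$ and gives $\|A(m,n)x\|_{m,\alpha}\le 2D\,e^{\beta(m-n)}\|x\|_{n,\alpha}$ with the explicit $\beta=-1/(2c_\alpha)$; from this you correctly read off both admissibility of $\beta$ and the two-sided uniform comparison $\|x\|_{n,\alpha}\le\|x\|_{n,\beta}\le 2D\|x\|_{n,\alpha}$, hence $c_{00}(\alpha)=c_{00}(\beta)$. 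Your route buys explicit constants and an explicit exponent, avoids the paper's delicate peak-localization and contradiction step, and in fact delivers the norm equivalence directly, at the cost of redoing the three-step bootstrap inside the weighted norms (do treat $\alpha=0$ separately in the base case, where $\|A(m,n)x\|_{m,0}\le\|x\|_{n,0}$ is immediate, just as the paper does for \eqref{eq4}). The sufficiency direction is essentially the paper's: $G_\alpha=G_\beta$ as maps and invertibility transfers, your open-mapping/norm-equivalence remark being a harmless way to make the transfer of bounded invertibility explicit.
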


\begin{proof}
Sufficiency. Since $c_{00}(\alpha)=c_{00}(\beta)$ then from one hand $%
G_{\alpha}=G_{\beta}$ and from another hand, as $\beta<0$, from the proof of
the necessity of the previous theorem we get that $G_{\beta}$ is invertible.
Thus $G_{\alpha}$ is invertible.

Necessity. According to the above arguments it suffices to analyze only the
case $\alpha\geq0$. Assume that $G_{\alpha}$ is invertible on $c_{00}(\alpha
)$. The sufficiency of Theorem \ref{th1} implies that the sequence $\mathcal{%
A}$ is nonuniform exponentially stable, that is $\mathcal{E}_{ad}(\mathcal{A}%
)$ contains negative admissible exponents. We start our proof with some
auxiliary results.

\textbf{Step 1}. Basic constructions.

For chosen $\nu\in\mathcal{E}_{ad}(\mathcal{A})$ with $\nu<0$, pick $%
\varepsilon>0$ such that $\beta:=\nu+\varepsilon\in(-1,0)$. It follows that $%
\beta\in\mathcal{E}_{ad}(\mathcal{A})$. For any fixed $n\in\mathbb{N}$ and $%
x\in E\setminus\{0\}$ we define the sequence $\theta=\theta(\beta
,n,x)=(\theta_{k})_{k\in\mathbb{N}}$ in $E$\ by%
\begin{equation*}
\theta_{k}=%
\begin{cases}
e^{-\beta(k-n)}A(k,n)x, & k\geq n+1, \\
0, & k\in\{0,\ldots,n\}.%
\end{cases}
\end{equation*}
Let us notice that $\theta\in c_{00}(\alpha)$. Indeed, for $k\geq n+1$ we
have%
\begin{align*}
\| \theta_{k}\|_{k,\alpha} & =\sup\limits_{m\in\Delta_{k}}e^{-\alpha
(m-k)}\Vert A(m,k)\theta_{k}\Vert=\sup\limits_{m\in\Delta_{k}}e^{-\alpha
(m-k)}e^{-\beta(k-n)}\Vert A(m,n)x\Vert \\
& \leq\sup\limits_{m\in\Delta_{k}}e^{-\beta(k-n)}\Vert A(m,n)x\Vert\leq
\sup\limits_{m\in\Delta_{k}}e^{-\beta(k-n)}e^{\nu(m-n)}\,M_{n}\,\Vert x\Vert
\\
& \leq e^{-\beta(k-n)}e^{\nu(k-n)}\,M_{n}\,\Vert x\Vert=e^{-\varepsilon
(k-n)}\,M_{n}\,\Vert x\Vert,
\end{align*}
which proves that $\theta\in c_{00}(\alpha)$. Since $\lim\limits_{k%
\rightarrow \infty}\| \theta_{k}\|_{k,\alpha}=0$, there exists (not
necessarily unique) $k_{0}\in\mathbb{N}$ satisfying
\begin{equation*}
\sup\limits_{k\in\mathbb{N}}\| \theta_{k}\|_{k,\alpha}=\|
\theta_{k_{0}}\|_{k_{0},\alpha},\label{eq.Tp-1}
\end{equation*}
and let $p=p(\beta,n,x)\in\mathbb{N}$ be the minimum of all such $k_{0}$.
Remark that $p\geq n+1$.

\textbf{Step 2}. There exists $\beta\in\mathcal{E}_{ad}(\mathcal{A}%
)\cap(-1,0)$ such that for any $n\in\mathbb{N}$ and $x\in E\setminus\{0\}$,
one has $p=p(\beta,n,x)\in\left[ n+1,n-\frac{1}{\beta}\right) $.

Assuming the contrary we deduce that for any fixed integer $k\geq2$, large
enough such that $\beta_{k}=-\frac{1}{k}\in\mathcal{E}_{ad}(\mathcal{A})$,
there exist $n_{k}\in\mathbb{N}$ and $x_{k}\in E\setminus\left\{ 0\right\} $
for which $p_{k}=p\left( -\frac{1}{k},n_{k},x_{k}\right) \geq n_{k}+k$. Let
us denote $y=y(k)=\theta\left( -\frac{1}{k},n_{k},x_{k}\right) \in
c_{00}(\alpha)$ and let $v=v(k)=\frac{1}{k}\,y\in c_{00}(\alpha)$. We intend
to prove that the sequence $u=(u_{m})_{m\in\mathbb{N}}$ defined by
\begin{equation*}
u_{m}=%
\begin{cases}
\dfrac{1}{k}\sum\limits_{j=1}^{m-n_{k}}e^{\frac{j}{k}}\,A(m,n_{k})x_{k}, &
m\geq n_{k}+1, \\
0, & m\in\{0,\ldots,n_{k}\},%
\end{cases}
\end{equation*}
belongs to $c_{00}(\alpha)$. From inequality
\begin{equation*}
\dfrac{e^{t}-1}{t}<e^{t}<\dfrac{e^{t}(e^{t}-1)}{t},\;t>0,
\end{equation*}
substituting $t=1/k$ one gets
\begin{equation*}
1<\dfrac{e^{\frac{1}{k}}}{k(e^{\frac{1}{k}}-1)}<e^{\frac{1}{k}}.
\end{equation*}
For $m\geq n_{k}+1$, multiplying the above relation by $e^{\frac{m-n_{k}}{k}%
}-1$, we have
\begin{equation}
e^{\frac{m-n_{k}}{k}}-1<\dfrac{1}{k}\sum\limits_{j=1}^{m-n_{k}}e^{\frac{j}{k}%
}<e^{\frac{m-n_{k}+1}{k}}-e^{\frac{1}{k}}.   \label{eq7}
\end{equation}
The right hand inequality (\ref{eq7}) readily implies
\begin{align*}
\Vert u_{m}\Vert_{m,\alpha} & =\left( \dfrac{1}{k}\sum%
\limits_{j=1}^{m-n_{k}}e^{\frac{j}{k}}\right)
\,\sup\limits_{i\in\Delta_{m}}e^{-\alpha(i-m)}\Vert A(i,n_{k})x_{k}\Vert \\
& <e^{\frac{1}{k}}\left( 1-e^{-\frac{m-n_{k}}{k}}\right) \,\sup
\limits_{i\in\Delta_{m}}e^{-\alpha(i-m)}e^{\frac{m-n_{k}}{k}}\Vert
A(i,n_{k})x_{k}\Vert \\
& =e^{\frac{1}{k}}\left( 1-e^{-\frac{m-n_{k}}{k}}\right) \Vert
y_{m}\Vert_{m,\alpha}.
\end{align*}
Since $y\in c_{00}(\alpha)$, it follows that $u\in c_{00}(\alpha)$, moreover
\begin{equation*}
\Vert u\Vert_{\alpha}\leq e^{\frac{1}{k}}\Vert y\Vert_{\alpha}.
\end{equation*}
On the other hand, one can easily see that $u_{m}=\sum%
\limits_{j=0}^{m}A(m,j)v_{j}$, $m\in\mathbb{N}$. Lemma \ref{lem2-1} (i)
yields $G_{\alpha }u=-v$. Using the left hand inequality in \eqref{eq7},
under hypothesis $p_{k}\geq n_{k}+k$ we successively have
\begin{align*}
\Vert u\Vert_{\alpha} & \geq\Vert u_{p_{k}}\Vert_{p_{k},\alpha}>\left( 1-e^{-%
\frac{p_{k}-n_{k}}{k}}\right) \Vert y_{p_{k}}\Vert_{p_{k},\alpha} \\
& =\left( 1-e^{-\frac{p_{k}-n_{k}}{k}}\right) \Vert y\Vert_{\alpha} \\
& \geq\left( 1-e^{-1}\right) \Vert y\Vert_{\alpha}.
\end{align*}
As%
\begin{equation*}
\frac{\Vert u\Vert_{\alpha}}{\Vert v\Vert_{\alpha}}=\frac{\Vert y\Vert
_{\alpha}}{\Vert v\Vert_{\alpha}}\frac{\Vert u\Vert_{\alpha}}{\Vert
y\Vert_{\alpha}}=k\frac{\Vert u\Vert_{\alpha}}{\Vert y\Vert_{\alpha}}%
>k\left( 1-e^{-1}\right) ,
\end{equation*}
it turns out that
\begin{equation*}
\frac{\Vert G_{\alpha}^{-1}v\Vert_{\alpha}}{\Vert v\Vert_{\alpha}}=\frac{%
\Vert u\Vert_{\alpha}}{\Vert v\Vert_{\alpha}}>k\left( 1-e^{-1}\right) ,
\end{equation*}
which contradicts the boundedness of the operator $G_{\alpha}^{-1}$ on $%
c_{00}(\alpha)$. It follows that the statement in Step 2 holds.

\textbf{Step 3}. The proof of the theorem.

Let $\beta\in\mathcal{E}_{ad}(\mathcal{A})$ be the admissible exponent in
Step 2. Since $\beta<\alpha$, to get $c_{00}(\alpha)=c_{00}(\beta)$, by Remark \ref{r1} (ii), it
suffices to prove the inclusion $c_{00}(\alpha)\subset c_{00}(\beta)$.
Indeed, let $u\in c_{00}(\alpha)$ and pick $n\in\mathbb{N}$. If $u_{n}\neq0$
then for all $m\in\Delta_{n}$ one has
\begin{equation*}
e^{-\beta(m-n)}\Vert A(m,n)u_{n}\Vert=\Vert\theta_{m}\Vert\leq\Vert\theta
_{m}\Vert_{m,\alpha}\leq\Vert\theta\Vert_{\alpha}=\Vert\theta_{p}\Vert_{p,%
\alpha},
\end{equation*}
with $\theta=\theta(\beta,n,u_{n})$ and $p=p(\beta,n,u_{n})$ being defined
in Step 1. We estimate
\begin{align*}
\sup\limits_{m\in\Delta_{n}}e^{-\beta(m-n)}\Vert A(m,n)u_{n}\Vert & \leq
\Vert\theta_{p}\Vert_{p,\alpha}=\sup\limits_{m\in\Delta_{p}}e^{(\alpha
-\beta)(p-n)}e^{-\alpha(m-n)}\Vert A(m,n)u_{n}\Vert \\
& \leq e^{-(\alpha-\beta)\frac{1}{\beta}}\sup\limits_{m\in\Delta_{n}}e^{-%
\alpha(m-n)}\Vert A(m,n)u_{n}\Vert.
\end{align*}
From the above inequality, observing that it still holds for $u_{n}=0$, we
deduce that $\Vert u_{n}\Vert_{n,\beta}\leq e^{-(\alpha-\beta)\frac{1}{\beta}%
}\Vert u_{n}\Vert_{n,\alpha}$, and this implies $c_{00}(\alpha)\subset
c_{00}(\beta)$.
\end{proof}

\section{Nonuniform exponential stability of discrete bounded orbits}

\label{sec.4}

From Section \ref{sec.3} we already know that a spectral property of the
operator\ $G_{\alpha}$ can completely characterize nonuniform exponential
stability. In this section we expose a sufficient condition for the
existence of nonuniform exponential stability of all bounded orbits of $%
\mathcal{A}$, by weakening the condition $0\notin\sigma(G_{\alpha})$. Recall
that the \emph{approximate point spectrum} of a bounded linear operator $%
T:E\rightarrow E$ is the set
\begin{equation*}
\sigma_{ap}(T)=\{\lambda\in\mathbb{C}:\,\forall\,\varepsilon>0\;\exists\,x%
\in E\text{ with }\Vert x\Vert=1\text{ such that }\Vert\lambda x-Tx\Vert
\leq\varepsilon\}.
\end{equation*}

\begin{theorem}
\label{t2}Assume that $0\notin\sigma_{ap}(G_{\alpha})$, for some $\alpha \in%
\mathcal{E}_{ad}(\mathcal{A})$. If
\begin{equation*}
\sup\limits_{m\in\Delta_{n}}\Vert A(m,n)x_{0}\Vert<\infty,
\end{equation*}
for some fixed $n\in\mathbb{N}$ and $x_{0}\in E$, then there exist a
constant $\nu>0$ and a positive real sequence $(N_{k})_{k\in\mathbb{N}}$,
independent of $n$ and $x_{0}$, such that
\begin{equation*}
\Vert A(m,n)x_{0}\Vert\leq N_{n}e^{-\nu(m-n)}\Vert x_{0}\Vert,\text{ for }%
m\in\Delta_{n}.
\end{equation*}
\end{theorem}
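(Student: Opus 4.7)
The plan is to combine a tent-function contradiction argument with the polynomial-then-exponential iteration from the proof of Theorem~\ref{th1}. Since $0\notin\sigma_{ap}(G_\alpha)$, there exists $c>0$ with $\|G_\alpha u\|_\alpha\geq c\|u\|_\alpha$ for all $u\in c_{00}(\alpha)$. Let $y_j=A(j,n)x_0$ for $j\geq n$ (extended by zero otherwise) and $K=\sup_j\|y_j\|$. The case $\alpha<0$ is immediate from the admissibility bound with $\nu=-\alpha$, so I assume $\alpha\geq 0$, which yields $\|y_j\|_{j,\alpha}\leq K$.

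My first step, and the main obstacle, is to prove $\|y_j\|\to 0$. Arguing by contradiction, pick $j_k\to\infty$ and $L>0$ with $\|y_{j_k}\|\geq L/2$. For each integer $M\geq 1$ and $k$ large enough that $j_k-M>n$, introduce the tent weight $\phi_{M,k}(j)=\max(0,\,1-|j-j_k|/M)$ and set $u^{(M,k)}_j=\phi_{M,k}(j)y_j$; this is a finite-support element of $c_{00}(\alpha)$. Using the orbit relation $A_{j-1}y_{j-1}=y_j$ on the support, one checks $(Gu^{(M,k)})_j=[\phi_{M,k}(j-1)-\phi_{M,k}(j)]\,y_j$, whose coefficients have modulus at most $1/M$, so $\|Gu^{(M,k)}\|_\alpha\leq K/M$. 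On the other hand $\|u^{(M,k)}\|_\alpha\geq\|y_{j_k}\|_{j_k,\alpha}\geq\|y_{j_k}\|\geq L/2$. The bounded-below inequality then forces $L/2\leq K/(cM)$, which fails for $M$ large. Hence $\|y_j\|\to 0$, and consequently $\|y_j\|_{j,\alpha}\to 0$.

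Next I would rerun the Step~2 / Step~3 iteration of Theorem~\ref{th1}. For $m>n$ and $k\geq 0$ put $v_i=(i-n)^k A(i,n)x_0$ on $\{n+1,\ldots,m\}$ (zero otherwise) and $u_j=\sum_{i=0}^j A(j,i)v_i=\sigma_k(\min(j,m)-n)A(j,n)x_0$ with $\sigma_k(N)=\sum_{i=1}^N i^k$. The decay $\|y_j\|_{j,\alpha}\to 0$ from the previous step is precisely what makes the tail $u_j$ for $j\geq m$ belong to $c_{00}(\alpha)$, so Lemma~\ref{lem2-1}(i) gives $Gu=-v$. Induction on $k$ (the inductive hypothesis forces $\|v\|_\alpha\leq (k!/c^k)K$ and the lower estimate $\|u_m\|_{m,\alpha}\geq\frac{(m-n)^{k+1}}{k+1}\|A(m,n)x_0\|$ combines with $\|u\|_\alpha\leq c^{-1}\|v\|_\alpha$) produces
\[
(m-n)^k\|A(m,n)x_0\|\leq \frac{k!}{c^k}\,K,
\]
and multiplying by $(c(m-n)/2)^k/k!$ and summing in $k$ yields $\|A(m,n)x_0\|\leq 2K e^{-c(m-n)/2}$.

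To finish, I would express $K$ in terms of $\|x_0\|$ by a short bootstrap: for any $\varepsilon>0$ pick $m^*$ with $\|A(m^*,n)x_0\|\geq(1-\varepsilon)K$; the decay bound forces $m^*-n\leq(2/c)\log(2/(1-\varepsilon))$, and combining with the admissibility estimate $\|A(m^*,n)x_0\|\leq M_n e^{\alpha(m^*-n)}\|x_0\|$ and letting $\varepsilon\to 0$ gives $K\leq M_n\cdot 2^{2\alpha/c}\|x_0\|$. Setting $\nu=c/2$ and $N_n=2M_n\max(1,2^{2\alpha/c})$ then yields the required conclusion, with $\nu$ and $(N_k)$ independent of $x_0$. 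The conceptual difficulty is concentrated in the first step: without qualitative orbit decay, the candidate $u$ from Lemma~\ref{lem2-1}(i) need not lie in $c_{00}(\alpha)$, so the bounded-below inequality is inapplicable; the tent-function construction is designed precisely to extract this qualitative decay from the spectral hypothesis.
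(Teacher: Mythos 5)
Your proposal is correct, and its engine is the same as the paper's: the lower bound $\Vert u\Vert_{\alpha}\leq\theta\Vert G_{\alpha}u\Vert_{\alpha}$ from $0\notin\sigma_{ap}(G_{\alpha})$, weighted orbit sequences $(i-n)^{k}A(i,n)x_{0}$ fed through Lemma \ref{lem2-1}(i), Faulhaber's inequality, and the summation over $k$ that converts the factorial/polynomial bounds into exponential decay (the paper's Steps 3--4). Where you genuinely diverge is in the preliminary step and its ordering. The paper never proves qualitative orbit decay separately: its Step 1 test sequence carries a built-in decaying tail $1/(k-n)$ (and Step 3 a tail $(m-n)^{k+2}/(i-n)^{2}$), which at once guarantees membership in $c_{00}(\alpha)$ and yields the quantitative bound $\Vert A(m,n)x_{0}\Vert\leq\frac{\theta}{m-n}K(n,x_{0})$; that bound is then used (Step 2) to locate the attained maximum within distance $\theta$ of $n$ and get $K(n,x_{0})\leq\theta e^{\alpha\theta}M_{n}\Vert x_{0}\Vert$ \emph{before} the induction. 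You instead extract the purely qualitative decay $\Vert A(m,n)x_{0}\Vert\to0$ by a separate tent-function contradiction argument (which is sound: the tent differences are bounded by $1/M$, so $\Vert G_{\alpha}u\Vert_{\alpha}\leq K/M$ while $\Vert u\Vert_{\alpha}\geq L/2$), use this decay to legitimize the simpler ramp-plus-constant-tail sequences in the induction, and only at the end bootstrap $K\leq 2^{2\alpha/c}M_{n}\Vert x_{0}\Vert$ from the exponential estimate via a near-maximum argument. Both routes deliver $\nu$ and $N_{n}=CM_{n}$ depending only on $\alpha$-level data; yours isolates the qualitative decay transparently at the cost of an extra construction, while the paper's tailored weights make each step self-contained and give the $1/(m-n)$ decay for free. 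One small bookkeeping point: in your induction the weights with exponent $k$ actually produce the bound for $k+1$ (the displayed inequality $(m-n)^{k}\Vert A(m,n)x_{0}\Vert\leq (k!/c^{k})K$ is the inductive hypothesis/conclusion for all $k$, which is what you need), so this is a mismatch of wording, not a gap.
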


\begin{proof}
It suffices to consider the case $\alpha\geq0$. Since $0\notin\sigma
_{ap}(G_{\alpha})$, there exists $\theta>0$ (depending only on $\alpha$)
such that
\begin{equation*}
\Vert u\Vert_{\alpha}\leq\theta\,\Vert G_{\alpha}u\Vert_{\alpha},\text{ for
all }u\in c_{00}(\alpha).
\end{equation*}
Pick $n\in\mathbb{N}$ and $x_{0}\in E$ for which $K(n,x_{0}):=\sup
\limits_{m\in\Delta_{n}}\Vert A(m,n)x_{0}\Vert<\infty$. We divide the proof
into several steps.

\textbf{Step1}. The following estimation holds:
\begin{equation}
\Vert A(m,n)x_{0}\Vert\leq\frac{\theta}{m-n}\,K(n,x_{0}),\text{ for every
integer }m>n.   \label{eq9}
\end{equation}

For any fixed integer $m>n$ we define the real sequence $a=(a_{k})_{k\in%
\mathbb{N}}$ by
\begin{equation*}
a_{k}=%
\begin{cases}
0, & k\in\{0,\ldots,n\}, \\
\frac{k-n}{(m-n)^{2}}, & k\in\{n+1,\ldots,m\}, \\
\frac{1}{k-n}, & k\geq m+1.%
\end{cases}
\end{equation*}
Using the above sequence $a=(a_{k})_{k\in\mathbb{N}}$ we construct the
(vector valued) sequences $u=u(m,n,x_{0})=(u_{k})_{k\in\mathbb{N}}$ and $%
v=v(m,n,x_{0})=(v_{k})_{k\in\mathbb{N}}$ defined by
\begin{equation*}
u_{k}=%
\begin{cases}
a_{k}A(k,n)x_{0}, & k\geq n+1, \\
0, & k\in\{0,\ldots,n\},%
\end{cases}
\end{equation*}
and
\begin{equation*}
v_{k}=%
\begin{cases}
(a_{k}-a_{k-1})A(k,n)x_{0}, & k\geq n+1, \\
0, & k\in\{0,\ldots,n\}.%
\end{cases}
\end{equation*}
One can easily prove that $u,v\in c_{00}(\alpha)$. Moreover, if $k\in
\{0,\ldots,n\}$ we see that $\sum\limits_{i=0}^{k}A(k,i)v_{i}=0=u_{k}$,
meanwhile for $k\geq n+1$ we have%
\begin{equation*}
\sum\limits_{i=0}^{k}A(k,i)v_{i}=\sum%
\limits_{i=n+1}^{k}(a_{i}-a_{i-1})A(k,n)x_{0}=a_{k}A(k,n)x_{0}=u_{k}.
\end{equation*}
By Lemma \ref{lem2-1} (i) it follows that $G_{\alpha}u=-v$. For $k\geq n+1$
we have%
\begin{equation*}
\|v_{k}\|_{k,\alpha}=|a_{k}-a_{k-1}|\sup\limits_{i\in\Delta_{k}}e^{-%
\alpha(i-k)}\Vert A(i,n)x_{0}\Vert\leq|a_{k}-a_{k-1}|K(n,x_{0}).
\end{equation*}
A simple computation shows that $|a_{k}-a_{k-1}|\leq\frac{1}{(m-n)^{2}}$,
and thus
\begin{equation*}
\Vert v\Vert_{\alpha}\leq\frac{1}{(m-n)^{2}}\,K(n,x_{0}).
\end{equation*}
Now, we successively have
\begin{equation*}
\frac{1}{m-n}\Vert A(m,n)x_{0}\Vert=\Vert u_{m}\Vert\leq\Vert u\Vert_{\alpha
}\leq\theta\,\Vert G_{\alpha}u\Vert_{\alpha}=\theta\,\Vert
v\Vert_{\alpha}\leq\frac{\theta}{(m-n)^{2}}\,K(n,x_{0}).
\end{equation*}
The above estimation directly proves \eqref{eq9}.

\textbf{Step 2}. If $(M_{i})_{i\in\mathbb{N}}$ is the sequence given by %
\eqref{eq1}, then there exists a constant $L>0$ (depending only on $\alpha$)
such that
\begin{equation}  \label{eq10}
\| A(m,n) x_{0} \| \leq\frac{\theta L}{m-n}M_{n}\,\| x_{0}\|, \text{ for
every integer } m>n.
\end{equation}

Estimation \eqref{eq9} yields $\lim\limits_{m\to\infty} \| A(m,n) x_{0}\| =0$.
Thus, there exists a non-negative integer $m_{0}=m_{0}(n,x_{0})\geq n$ for
which $$K(n,x_0)=\sup\limits_{ m\in\Delta_{n}} \| A(m,n)x_{0}\| =\|A(m_{0},n)x_{0}\| .$$
The same inequality \eqref{eq9} also implies $(m_{0}-n)\| A(m_{0},n) x_{0}
\| \leq\theta\, \| A(m_{0},n) x_{0}\| $, which is equivalent to $%
m_{0}-n\leq\theta$. For each positive integer $m>n$, one has
\begin{align*}
\| A(m,n)x_{0}\| & \leq\frac{\theta}{m-n}\| A(m_{0},n)x_{0}\| \\
& \leq\frac{\theta}{m-n}\,e^{\alpha(m_{0}-n)} M_{n} \,\|x_{0}\| \\
& \leq\frac{\theta e^{\alpha\theta}}{m-n}\, M_{n} \,\|x_{0}\|.
\end{align*}
This shows that \eqref{eq10} holds for $L=e^{\alpha\theta}$.

\textbf{Step 3}. For each $k\in\mathbb{N}$, the following
inequality holds:
\begin{equation}  \label{eq11}
\| A(m,n)x_{0}\|\leq\frac{k!\theta^{k}}{(m-n)^{k}}\,\widetilde {N}_{n} \|
x_{0} \|, \text{ for every integer } m>n,
\end{equation}
where $\widetilde{N}_{n}=\theta L M_{n}$, with $L>0$ given by Step 2.

From Step 2 it readily follows that
$$ \| A(m,n)x_{0}\|\leq \frac{\widetilde{N}_n}{m-n} \|x_0\|\leq \widetilde{N}_n \|x_0\|,$$
and thus \eqref{eq11} holds for $k=0$. Fix now $k\in\mathbb{N}^{*}$
and assume that \eqref{eq11} is valid for $k-1$. For any fixed integer $m>n$
we define a sequence $b=b(m,n,k)=(b_{i})_{i\in\mathbb{N}}$ by
\begin{equation*}
b_{i}=
\begin{cases}
0, & i\in\{0,\ldots,n\}, \\
(i-n)^{k}, & i\in\{ n+1,\dots,m\}, \\
\frac{(m-n)^{k+2}}{(i-n)^{2}}, & i\geq m+1.%
\end{cases}
\end{equation*}
Using $b$ we construct sequences $u=(u_{i})_{i\in\mathbb{N}}$ and $%
v=(v_{i})_{i\in\mathbb{N}}$ given by
\begin{equation*}
u_{i}=
\begin{cases}
b_{i} A(i,n)x_{0}, & i\geq n+1, \\
0, & i\in\{0,\ldots,n\},%
\end{cases}
\end{equation*}
and
\begin{equation*}
v_{i}=
\begin{cases}
(b_{i}-b_{i-1}) A(i,n)x_{0}, & i\geq n+1, \\
0, & i\in\{0,\ldots,n\}.%
\end{cases}
\end{equation*}
As in Step 1, one may show that $G_{\alpha}u=-v$. If $i\in\{0,\ldots,n\}$,
then $\|v_{i}\|_{i,\alpha} =0$. For $i\geq n+1$, let us estimate
\begin{align*}
\|v_{i}\|_{i,\alpha} & \leq|b_{i}-b_{i-1}| \sup\limits_{j\in\Delta_{i}} \|
A(j,n) x_{0}\| \\
& \leq k (i-n)^{k-1} \sup\limits_{j\in\Delta_{i}} \frac{(k-1)!\,\theta^{k-1}%
}{(j-n)^{k-1}}\,\widetilde{N}_{n} \| x_{0}\| \\
& = k! \, \theta^{k-1} \widetilde{N}_{n} \| x_{0}\|.
\end{align*}
It follows that $\|v\|_{\alpha}\leq k! \, \theta^{k-1} \widetilde{N}_{n} \|
x_{0}\|$, and thus
\begin{equation*}
(m-n)^{k} \| A(m,n)x_{0}\| = \| u_{m} \| \leq\| u\|_{\alpha}\leq\theta\|
v\|_{\alpha}\leq k! \, \theta^{k} \widetilde{N}_{n} \| x_{0}\|.
\end{equation*}
This last estimation readily implies that \eqref{eq11} holds for $k$, which
proves the claim.

\textbf{Step 4}. For any fixed $\delta\in(0,1)$ one has
\begin{equation}  \label{eq.fin}
\| A(m,n)x_{0}\|\leq\frac{1}{1-\delta} \, \widetilde{N}_{n} e^{-\frac{\delta%
}{\theta}(m-n)}\| x_{0}\|, \text{ for every } m\in\Delta_{n}.
\end{equation}

Pick $\delta\in(0,1)$. Multiplying inequality \eqref{eq11} by $\delta^{k}$, $%
k\in\mathbb{N}$, we get
\begin{equation*}
\frac{1}{k!}\left[ \frac{\delta}{\theta}(m-n) \right] ^{k} \| A(m,n)x_{0}\|
\leq\delta^{k} \widetilde{N}_{n} \| x_{0}\|, \; m\in\Delta_{n}, \, k\in%
\mathbb{N},
\end{equation*}
and summing with respect to $k\in\mathbb{N}$, we obtain
\begin{equation*}
e^{\frac{\delta}{\theta} (m-n)} \| A(m,n)x_{0}\|\leq\frac{1}{1-\delta }\,%
\widetilde{N}_{n} \| x_{0}\|, \text{ for } m\in\Delta_{n},
\end{equation*}
which is equivalent to \eqref{eq.fin}, and this completes the proof.
\end{proof}

\section{Applications and examples}

\subsection{Applications to uniform exponential stability}

\label{sec.5}

As consequences of our conclusions, in what follows we deduce discrete
versions of some known results, and we especially refer to papers \cite{Bu},
\cite{Hu.Mi.2001}, and \cite{Mi.Ra.Sc.1998}.

The below result emphasizes the connection between uniform and nonuniform
behavior.

\begin{proposition}
\label{P1} Let $\alpha\in\mathcal{E}_{ad}(\mathcal{A})$. $\mathcal{A}$ is $%
\alpha$-uniform exponentially bounded if and only if $c_{00}(\alpha)=c_{00}$.
\end{proposition}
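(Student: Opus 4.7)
The plan is to prove both directions by connecting the sup norm on $c_{00}$ with the norm $\|\cdot\|_{\alpha}$ on $c_{00}(\alpha)$, using the closed graph theorem for the nontrivial direction.

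\textbf{Necessity.} If $\mathcal{A}$ is $\alpha$-uniform exponentially bounded, then there is a constant $M>0$ with $\|A(m,n)\|\leq Me^{\alpha(m-n)}$ for every $(m,n)\in\Delta$. Plugging this into the definition of $\|\cdot\|_{n,\alpha}$ gives $\|v\|_{n,\alpha}\leq M\|v\|$ for every $v\in E$ and every $n$. Combined with the trivial bound $\|v\|\leq\|v\|_{n,\alpha}$, the two norms are equivalent with constants \emph{independent of} $n$. Hence a sequence $u$ with $u_{0}=0$ satisfies $\|u_n\|_{n,\alpha}\to 0$ if and only if $\|u_n\|\to 0$, so $c_{00}(\alpha)=c_{00}$.

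\textbf{Sufficiency.} Assume $c_{00}(\alpha)=c_{00}$ as sets. By Remark \ref{r1}(i), the identity map $I:(c_{00},\|\cdot\|_{\infty})\to(c_{00}(\alpha),\|\cdot\|_{\alpha})$ is well defined and linear between Banach spaces. It is also closed: if $u^{(k)}\to u$ in $(c_{00},\|\cdot\|_{\infty})$ and $u^{(k)}\to v$ in $(c_{00}(\alpha),\|\cdot\|_{\alpha})$, then both convergences entail componentwise convergence in $E$ (since $\|\cdot\|\leq\|\cdot\|_{n,\alpha}$), forcing $u=v$. The closed graph theorem therefore provides a constant $C>0$ such that $\|u\|_{\alpha}\leq C\|u\|_{\infty}$ for all $u\in c_{00}$. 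Testing this on the sequence $u$ supported only at index $n\geq 1$ with value $x\in E$ (note that $u_0=0$ is required, so $n\geq 1$) yields
$$\sup_{m\geq n}e^{-\alpha(m-n)}\|A(m,n)x\|=\|x\|_{n,\alpha}=\|u\|_{\alpha}\leq C\|u\|_{\infty}=C\|x\|,$$
so $\|A(m,n)\|\leq Ce^{\alpha(m-n)}$ for every $m\geq n\geq 1$. The case $n=0$ is handled directly by the standing admissibility hypothesis $\alpha\in\mathcal{E}_{ad}(\mathcal{A})$, which gives $\|A(m,0)\|\leq M_{0}e^{\alpha m}$. Taking $M=\max(C,M_{0})$ produces the uniform bound and establishes $\alpha$-uniform exponential boundedness.

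\textbf{Main obstacle.} The argument is essentially routine once one recognizes it as a closed-graph application; the only real subtlety is that the requirement $u_0=0$ in the definition of $c_{00}$ prevents placing a test sequence at index $n=0$, which is why one must absorb the $n=0$ estimate into the final constant by hand using the initial hypothesis on $\alpha$.
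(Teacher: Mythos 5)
Your proof is correct, but your sufficiency argument takes a genuinely different route from the paper's. The paper proceeds by contradiction with a bare-hands construction: if the uniform bound fails, it selects vectors $u_n\neq 0$ and pairs $k_n\geq l_n$ with $\Vert A(k_n,l_n)u_n\Vert>ne^{\alpha(k_n-l_n)}\Vert u_n\Vert$, rules out bounded $(l_n)$ directly from \eqref{eq1}, and for $(l_n)$ strictly increasing builds an explicit $v\in c_{00}$ with $v_{l_n}=\alpha_n u_n/\Vert u_n\Vert$, where $\alpha_n\to 0$ but $n|\alpha_n|\to\infty$, and shows $v\notin c_{00}(\alpha)$; this is elementary and never uses completeness of $(c_{00}(\alpha),\Vert\cdot\Vert_\alpha)$. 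You instead apply the closed graph theorem to the identity $(c_{00},\Vert\cdot\Vert_\infty)\to(c_{00}(\alpha),\Vert\cdot\Vert_\alpha)$ to obtain $\Vert u\Vert_\alpha\leq C\Vert u\Vert_\infty$, and then test on sequences supported at a single index $n\geq 1$, absorbing the $n=0$ case into the constant via $M_0$ — that last care is exactly right, and the resulting bound $\max(C,M_0)$ does give $\alpha$-uniform exponential boundedness. What your approach buys is brevity and a transparent mechanism (a uniform comparison of the two norms); what it costs is reliance on the completeness of $c_{00}(\alpha)$, which the paper only asserts via ``standard arguments,'' so your proof inherits that ingredient while the paper's avoids it. Two small remarks: your citation of Remark \ref{r1}(i) for well-definedness points the wrong way — well-definedness of the identity map needs $c_{00}\subset c_{00}(\alpha)$, which comes from the assumed equality, not from the remark (which gives the reverse inclusion); and your necessity direction coincides with the paper's.
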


\begin{proof}
\emph{Sufficiency}. Assume that $c_{00}(\alpha)=c_{00}$ for some $\alpha \in%
\mathcal{E}_{ad}(\mathcal{A})$. If $\mathcal{A}$ is not $\alpha$-uniform
exponentially bounded, then for each $n\in\mathbb{N}$ there exist $u_{n}\in E
$, $u_{n}\neq0$, and integers $k_{n}\geq l_{n}\geq0$ that satisfy estimation
\begin{equation}
\Vert A(k_{n},l_{n})u_{n}\Vert>ne^{\alpha(k_{n}-l_{n})}\Vert u_{n}\Vert.
\label{eq.est}
\end{equation}
If the sequence $(l_{n})_{n\in\mathbb{N}}$ is bounded, then $l_{n}\leq l$
for some $l\in\mathbb{N}$, thus
\begin{equation*}
n\Vert u_{n}\Vert<e^{-\alpha(k_{n}-l_{n})}\Vert A(k_{n},l_{n})u_{n}\Vert\leq
M_{l_{n}}\Vert u_{n}\Vert\leq\max\limits_{k\in\{0,\ldots,l\}}M_{k}\Vert
u_{n}\Vert,
\end{equation*}
which is a contradiction. If $(l_{n})_{n\in\mathbb{N}}$ is strictly
increasing, let $(\alpha_{n})_{n\in\mathbb{N}}$ be a real sequence such that
$\alpha_{n}\rightarrow0$ and $n|\alpha_{n}|\rightarrow\infty$ as $%
n\rightarrow\infty$. Consider a sequence $v=(v_{n})_{n\in\mathbb{N}}\in
c_{00}$ for which its subsequence $(v_{l_{n}})_{n\in\mathbb{N}}$ is given by
\begin{equation*}
v_{l_{n}}=\alpha_{n}\,\dfrac{u_{n}}{\Vert u_{n}\Vert},\text{ for }n\in%
\mathbb{N}^{\ast}.
\end{equation*}
The hypothesis $c_{00}(\alpha)=c_{00}$ implies $v\in c_{00}(\alpha)$. On the
other hand, according to \eqref{eq.est} we have
\begin{equation*}
n|\alpha_{n}|< e^{-\alpha(k_{n}-l_{n})}\Vert
A(k_{n},l_{n})v_{l_{n}}\Vert\leq\Vert v_{l_{n}}\Vert_{l_{n},\alpha}.
\end{equation*}
The above estimation yields $v\notin c_{00}(\alpha)$, that contradicts our
assumption. We thereby conclude that $\mathcal{A}$ is $\alpha$-uniform
exponentially bounded.

\emph{Necessity}. Assume that there exists a constant $M>0$ such that
\begin{equation*}
\Vert A(m,n)\Vert\leq Me^{\alpha(m-n)},\text{ for all }(m,n)\in\Delta.
\end{equation*}
From Remark \ref{r1} (i), it suffices to prove the inclusion $c_{00}\subset
c_{00}(\alpha)$. In this regard, let us notice that for every $u\in c_{00}$
one gets $\Vert u_{n}\Vert_{n,\alpha}\leq M\Vert u_{n}\Vert\rightarrow0$ as $%
n\rightarrow0$, therefore $u\in c_{00}(\alpha)$.
\end{proof}

Using Proposition \ref{P1} and Theorem \ref{th1}, one can obtain a necessary
and sufficient condition for the existence of uniform exponential stability,
in terms of invertibility of the operator $G$ restricted on $c_{00}$.

\begin{corollary}
\label{cor.u1} If the sequence $\left( A_{n}\right) _{n\in\mathbb{N}}$ is
uniform exponentially bounded, then it is uniform exponentially stable if
and only if the operator $G$ is invertible on $c_{00}$.
\end{corollary}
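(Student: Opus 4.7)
The plan is to deduce this corollary as a direct consequence of Proposition \ref{P1} combined with Theorem \ref{th1}. Since $\mathcal{A}$ is assumed uniform exponentially bounded, Proposition \ref{P1} provides some admissible exponent $\beta$ for which $c_{00}(\beta)$ and $c_{00}$ coincide as sets. First I would observe that under this hypothesis the two norms $\|\cdot\|_\beta$ and $\|\cdot\|_\infty$ are in fact equivalent on this common space: the inequality $\|v\|\leq\|v\|_{n,\beta}$ is universal, while the uniform bound $\|A(m,n)\|\leq Me^{\beta(m-n)}$ yields $\|v\|_{n,\beta}\leq M\|v\|$. Hence $G:(c_{00},\|\cdot\|_\infty)\to(c_{00},\|\cdot\|_\infty)$ is invertible if and only if its (literally identical) restriction $G_\beta:(c_{00}(\beta),\|\cdot\|_\beta)\to(c_{00}(\beta),\|\cdot\|_\beta)$ is invertible.

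For the necessity, uniform exponential stability produces $\alpha<0$ in $\mathcal{E}_{ad}(\mathcal{A})$ for which $\mathcal{A}$ is $\alpha$-uniform exponentially bounded. Lemma \ref{lem2-1}(i)–(ii) makes $G_\alpha$ a bijection on $c_{00}(\alpha)$, and the open mapping theorem (or the explicit series bound inside the proof of Theorem \ref{th1}) delivers boundedness of the inverse. Applying the norm-equivalence observation with $\beta=\alpha$ then transfers this into invertibility of $G$ on $c_{00}$.

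For the sufficiency, I would start with $G$ invertible on $c_{00}$ and transfer this, via the same norm-equivalence applied to the $\beta$ supplied by the standing hypothesis, to invertibility of $G_\beta$ on $c_{00}(\beta)$. Theorem \ref{th1} then yields nonuniform exponential stability of $\mathcal{A}$. The one genuinely nontrivial step is to upgrade this conclusion from nonuniform to uniform; for that I would revisit estimate \eqref{eq6} in the proof of Theorem \ref{th1}. Its decay exponent $-1/(2c_\beta)$ is already independent of $n$, and its prefactor $2[c_\beta(e^\beta-1)+1]\,M_n$ becomes uniform in $n$ precisely because the sequence $(M_n)$ can be chosen constant under the uniform exponential boundedness hypothesis. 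This yields $\|A(m,n)\|\leq Ce^{-\nu(m-n)}$ with $C,\nu>0$ independent of $n$, which is uniform exponential stability.

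The main (though mild) obstacle is exactly this last upgrade in the converse direction: one must verify that none of the intervening constants in Steps 1–3 of the proof of Theorem \ref{th1} smuggle in a hidden dependence on $n$ once $M_n$ is replaced by a constant. A direct tracking through those steps confirms that they do not, so everything reduces to bookkeeping on the single underlying Banach space $c_{00}$.
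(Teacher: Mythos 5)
Your proposal is correct and follows essentially the route the paper indicates for Corollary \ref{cor.u1}: Proposition \ref{P1} identifies $c_{00}(\beta)$ with $c_{00}$ (with equivalent norms, since $\|v\|\leq\|v\|_{n,\beta}\leq M\|v\|$ once $M_{n}$ is taken constant), and Theorem \ref{th1}, together with the constant tracking in its proof --- estimate \eqref{eq6} has prefactor $2[c_{\beta}(e^{\beta}-1)+1]M_{n}$ and an $n$-independent decay rate $1/(2c_{\beta})$ --- gives the uniform conclusion in the converse direction. As a side remark, the uniformity upgrade can also be done purely at the level of statements, without reopening the proof of Theorem \ref{th1}: Theorem \ref{t1} supplies a negative admissible exponent $\gamma$ with $c_{00}(\gamma)=c_{00}(\beta)=c_{00}$, and Proposition \ref{P1} applied to $\gamma$ then says that $\mathcal{A}$ is $\gamma$-uniform exponentially bounded with $\gamma<0$, i.e.\ uniform exponentially stable.
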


The above result can be considered a discrete version of Theorem 2.2 in \cite%
{Mi.Ra.Sc.1998} (see also Theorem 3.4 in \cite{Bu}).

Reader will certainly notice that the statement in Theorem 2.1 from \cite%
{Hu.Mi.2001} exposed below, is also an immediate consequence of our Theorem %
\ref{t2}.

\begin{corollary}
\label{cor.u2} Let $\mathcal{A}$ be a uniform exponentially bounded sequence
and assume that $0\notin\sigma_{ap}(G)$. If $\sup\limits_{m\in\Delta_{n}} \|
A(m,n)x_{0} \| <\infty$ for some fixed $n\in\mathbb{N}$ and $x_{0}\in E$,
then there exist constants $N,\nu>0$, independent of $n$ and $x_{0}$, such
that
\begin{equation*}
\| A(m,n)x_{0}\| \leq N e^{-\nu(m-n)}\|x_{0}\|, \text{ for all } m\in
\Delta_{n}.
\end{equation*}
\end{corollary}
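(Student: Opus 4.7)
The plan is to reduce Corollary \ref{cor.u2} directly to Theorem \ref{t2} by exploiting Proposition \ref{P1} to identify the relevant Banach sequence spaces.

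First I would fix a constant $M>0$ and an admissible exponent $\alpha\in\mathcal{E}_{ad}(\mathcal{A})$ witnessing uniform exponential boundedness, so that $\|A(m,n)\|\le Me^{\alpha(m-n)}$ for all $(m,n)\in\Delta$. Proposition \ref{P1} then gives $c_{00}(\alpha)=c_{00}$ as sets. Moreover, the inequality $\|x\|\le\|x\|_{n,\alpha}\le M\|x\|$ holds with $M$ independent of $n$, so the sup-norms $\|\cdot\|_\infty$ and $\|\cdot\|_\alpha$ are equivalent on this common underlying set. Consequently the bounded linear operators $G$ on $(c_{00},\|\cdot\|_\infty)$ and $G_\alpha$ on $(c_{00}(\alpha),\|\cdot\|_\alpha)$ act by the same formula on the same space, and their approximate point spectra coincide: the hypothesis $0\notin\sigma_{ap}(G)$ translates into $0\notin\sigma_{ap}(G_\alpha)$.

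Next I would apply Theorem \ref{t2} to $G_\alpha$. For any $n\in\mathbb{N}$ and $x_0\in E$ with $\sup_{m\in\Delta_n}\|A(m,n)x_0\|<\infty$, it produces a constant $\nu>0$ and a positive real sequence $(N_k)_{k\in\mathbb{N}}$, both independent of $n$ and $x_0$, such that
\begin{equation*}
\|A(m,n)x_0\|\le N_n e^{-\nu(m-n)}\|x_0\|,\quad m\in\Delta_n.
\end{equation*}
The final step is to upgrade $N_n$ to a constant $N$. For this I would inspect the proof of Theorem \ref{t2}: the resulting coefficients have the explicit form $N_n=\widetilde{N}_n/(1-\delta)$ with $\widetilde{N}_n=\theta L M_n$ and $L=e^{\alpha\theta}$, where $(M_n)$ is the sequence from \eqref{eq1}. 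In the uniform setting we may take $M_n=M$ constant, and therefore $N_n\le N:=\theta L M/(1-\delta)$ is a constant, independent of $n$ and $x_0$, yielding the required estimate.

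There is essentially no obstacle here; the only subtle point is verifying that the constants manufactured in the proof of Theorem \ref{t2} are genuinely uniform in $n$ once $(M_n)$ is bounded. An alternative, more conceptual argument would note that the equivalence $\|x\|\le\|x\|_{n,\alpha}\le M\|x\|$ also implies $\|u\|_\infty\le\|u\|_\alpha\le M\|u\|_\infty$ on $c_{00}$, so any $n$-uniform bound for the inverse of $G_\alpha$ in the $\|\cdot\|_\alpha$-norm (which exists by $0\notin\sigma_{ap}(G_\alpha)$) transfers to an $n$-uniform bound in the $\|\cdot\|_\infty$-norm, and then repeating the four-step scheme of Theorem \ref{t2} with the original norm gives the stated result with a constant $N$ from the outset.
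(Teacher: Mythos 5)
Your proposal is correct and follows exactly the route the paper intends: the corollary is presented there as an immediate consequence of Theorem \ref{t2}, obtained by transferring $0\notin\sigma_{ap}(G)$ on $(c_{00},\|\cdot\|_\infty)$ to $G_\alpha$ on $c_{00}(\alpha)$ via the uniform norm equivalence $\|x\|\leq\|x\|_{n,\alpha}\leq M\|x\|$ (Proposition \ref{P1}), and then observing that with $M_n\equiv M$ the constants $N_n=\theta L M_n/(1-\delta)$ produced in the proof of Theorem \ref{t2} become a single constant $N$. Your explicit tracking of these constants is precisely the verification the paper leaves to the reader.
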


\subsection{Examples}

Let us emphasize an important difference between uniform and nonuniform
behavior: while in the uniform case $c_{00}(\alpha)=c_{00}$ for each
admissible exponent $\alpha$ (see Proposition \ref{P1}), in the nonuniform
setting there are examples of discrete dynamics for which $c_{00}(\alpha)$
is not unique. Moreover, there exist discrete families admitting infinitely
many such sequence spaces, as illustrated in Example \ref{ex.2}.

In the following particular situations we identify the set $\mathcal{E}_{ad}(%
\mathcal{A})$ for some operator sequences $\mathcal{A}$, pointing out the
connections between the spaces $c_{00}(\alpha)$, when $\alpha$ varies on $%
\mathcal{E}_{ad}(\mathcal{A})$.

\begin{example}
\label{ex1} Let $A_{n}=e^{a_{n}-a_{n+1}}\mathrm{Id}$, where $a_{n}=\dfrac {n%
}{2+(-1)^{n}}$, $n\in\mathbb{N}$. The discrete evolution family associated
to $\mathcal{A}=(A_{n})_{n\in\mathbb{N}}$ is given by
\begin{equation*}
A(m,n)=e^{a_{n}-a_{m}}\mathrm{Id},\text{ for }(m,n)\in\Delta.
\end{equation*}
The discrete dynamics $\mathcal{A}$ is at the same time not uniformly
exponentially bounded and nonuniform exponentially stable, with $\mathcal{E}%
_{ad}(\mathcal{A})=[-1/3,\infty)$. Moreover,
\begin{equation}
c_{00}(\alpha)=c_{00}(-1/3),\text{ }\alpha\in\mathcal{E}_{ad}(\mathcal{A}).
\label{eq.ex1}
\end{equation}
\end{example}

\begin{proof}
For all $(m,n)\in\Delta$ one has
\begin{align*}
a_{n}-a_{m} & =m\left( \frac{1}{3}-\frac{1}{2+(-1)^{m}}\right) -n\left(
\frac{1}{3}-\frac{1}{2+(-1)^{n}}\right) -\frac{1}{3}\left( m-n\right) \\
& =-\frac{m}{3}\cdot\frac{1-(-1)^{m}}{2+(-1)^{m}}+\frac{n}{3}\cdot \frac{%
1-(-1)^{n}}{2+(-1)^{n}}-\frac{1}{3}\left( m-n\right) .
\end{align*}

Since
\begin{equation*}
\left\Vert A_{2n-1}\right\Vert =e^{a_{2n-1}-a_{2n}}=e^{\left( 2n-1\right)
\frac{2}{3}-\frac{1}{3}}\rightarrow\infty,
\end{equation*}
it follows that $\mathcal{A}$ is not uniform exponentially bounded. Notice
that inequality $e^{\frac{1}{3}\left( m-n\right) }\Vert A(m,n)\Vert\leq e^{%
\frac{2}{3}n}$ for $(m,n)\in\Delta$ implies $-\frac{1}{3}\in\mathcal{E}_{ad}(%
\mathcal{A})$. Fix $\varepsilon>0$. From estimation%
\begin{equation*}
e^{(\frac{1}{3}+\varepsilon)\left( 2n-0\right) }\Vert A(2n,0)\Vert =e^{-%
\frac{2n}{3}+(\frac{1}{3}+\varepsilon)2n}=e^{2n\varepsilon}\rightarrow%
\infty,
\end{equation*}
we deduce that $-\frac{1}{3}-\varepsilon\notin\mathcal{E}_{ad}(\mathcal{A})$%
, in conclusion $\mathcal{E}_{ad}(\mathcal{A})=[-1/3,\infty)$.

To prove \eqref{eq.ex1} it suffices to show that $c_{00}(\alpha)\subset
c_{00}(-1/3)$, for all $\alpha\geq-\frac{1}{3}$. For such fixed $\alpha$ and
for any $(m,n)\in\Delta$ we put
\begin{equation*}
f_{\alpha}(m,n)=a_{n}-a_{m}-\alpha(m-n),
\end{equation*}
and rewriting
\begin{equation*}
f_{\alpha}(m,n)=-\frac{m}{3}\cdot\frac{1-(-1)^{m}}{2+(-1)^{m}}+\frac{n}{3}%
\cdot\frac{1-(-1)^{n}}{2+(-1)^{n}}-\left( \alpha+\frac{1}{3}\right) \left(
m-n\right) ,
\end{equation*}
notice that
\begin{equation}\label{eq.expl}
f_{n}=\sup\limits_{m\in\Delta_{n}}f_{-1/3}(m,n)=
\begin{cases}
0, &\text{if } n \text{ is  even},\\
\frac{2n}{3}, &\text{if } n \text{ is  odd}.
\end{cases}
\end{equation}
For each $n\in\mathbb{N}$ let us denote $m_{n}$ the (unique) even integer in
the set $\{n,n+1\}$. It is easy to check that $f_{\alpha}(m_{n},n)\in\left\{
0,\frac{2n}{3}-\left( \alpha+\frac{1}{3}\right) \right\} $.\ Then for each $%
u\in c_{00}(\alpha)$ we successively have
\begin{align*}
\Vert u_{n}\Vert_{n,-1/3} & =\sup\limits_{m\in\Delta_{n}}e^{\frac{1}{3}(m-n)}\Vert
A(m,n)u_{n}\Vert \\
& =\sup\limits_{m\in\Delta_{n}}e^{f_{-1/3}(m,n)}\Vert u_{n}\Vert\\
& =e^{f_{n}}\Vert u_{n}\Vert\\
&=e^{f_{n}-\sup\limits_{m\in\Delta_{n}}f_{\alpha
}(m,n)}\Vert u_{n}\Vert_{n,\alpha} \\
& \leq e^{f_{n}-f_{\alpha}(m_{n},n)}\Vert u_{n}\Vert_{n,\alpha}. \\
\end{align*}
By \eqref{eq.expl} we deduce that
\[
f_{n}-f_{\alpha}(m_{n},n)=
\begin{cases}
0, &\text{if } n \text{ is  even},\\
\alpha+1/3, &\text{if } n \text{ is  odd},
\end{cases}
\]
and thus $\Vert u_{n}\Vert_{n,-1/3}\leq e^{\alpha+\frac{1}{3}}\Vert u_{n}\Vert_{n,\alpha},$
which readily implies $u\in c_{00}(-1/3)$.
\end{proof}

The above particular case looks pretty similar to the uniform setting, that
is the sequence space $c_{00}(\alpha)$, $\alpha\in\mathcal{E}_{ad}(\mathcal{A%
})$, is unique. We expose below the completely different situation of a
discrete evolution family admitting infinitely many such spaces.

\begin{example}
\label{ex.2} For the discrete evolution family $A(m,n)=e^{f(m)-f(n)}\mathrm{%
Id}$, where
\begin{equation*}
f(t)=-2\sqrt{t}\cos\sqrt{t}+2\sin\sqrt{t}-t,\;t\geq0,
\end{equation*}
the following properties hold:

\begin{enumerate}
\item[(i)] $\mathcal{E}_{ad}\left( \mathcal{A}\right) =(-1,\infty)$;

\item[(ii)] $\mathcal{A}$ is $\alpha$-uniform exponentially bounded if $%
\alpha\geq0$, thus $c_{00}(\alpha)=c_{00}$;

\item[(iii)] $\mathcal{A}$ is not $\alpha$-uniform exponentially bounded if $%
\alpha\in(-1,0)$;

\item[(iv)] For all $\alpha<\beta$ in $(-1,0)$, the inclusion $%
c_{00}(\alpha)\subset c_{00}(\beta)$ is strict.
\end{enumerate}
\end{example}

\begin{proof}
In what follows letters $t$, $s$ denote real variables, while letters $m$, $%
n $, $p$ only refer to non-negative integers. For any fixed $\alpha\in
\mathbb{R}$ we set
\begin{equation*}
E_{\alpha}(m,n)=f(m)-f(n)-\alpha(m-n)=f_{\alpha}(m)-f_{\alpha}(n), \;
(m,n)\in\Delta,
\end{equation*}
where
\begin{equation*}
f_{\alpha}(t)=f(t)-\alpha t=-2\sqrt{t}\cos\sqrt{t}+2\sin\sqrt{t}%
-(\alpha+1)\,t,\;t\geq0.
\end{equation*}

(i) The density of the set $\left\{ \sin m\left\vert m\in\mathbb{N}\right.
\right\} $ in $\left[ -1,1\right] $ implies that there exists an unbounded
sequence of non-negative integers $(m_{n})_{n\in\mathbb{N}}$ with $\underset{%
n\rightarrow\infty}{\lim}\sin\sqrt{m_{n}}=0$, and $\underset{%
n\rightarrow\infty}{\lim}\cos\sqrt{m_{n}}=-1$. For $\alpha=-1$ one has%
\begin{align*}
\lim\limits_{n\rightarrow\infty}E_{-1}(m_{n},0) & =\lim\limits_{n\rightarrow
\infty}\left( -2\sqrt{m_{n}}\cos\sqrt{m_{n}}+2\sin\sqrt{m_{n}}\right) \\
& =\lim\limits_{n\rightarrow\infty}\left( -2\sqrt{m_{n}}\cos\sqrt{m_{n}}%
\right) =\infty,
\end{align*}
which implies $-1\notin\mathcal{E}_{ad}\left( \mathcal{A}\right) $. If $%
\alpha>-1$, observing that $\lim\limits_{m\rightarrow\infty}E_{\alpha
}(m,n)=-\infty$, one gets $\sup\limits_{m\in\Delta_{n}}E_{%
\alpha}(m,n)=a_{n}<\infty$. We thereby conclude that $\mathcal{E}_{ad}\left(
\mathcal{A}\right) =(-1,\infty)$.

(ii) If $\alpha\geq0$, then $f_{\alpha}^{\prime}(t)=\sin\sqrt{t}%
-(\alpha+1)\leq0$, thus $E_{\alpha}(m,n)\leq0$. This proves that $\mathcal{A}
$ is $\alpha$-uniform exponentially bounded, and by Proposition \ref{P1} we
deduce that $c_{00}(\alpha)=c_{00}$.

(iii) If $\alpha\in(-1,0)$, then set $\theta=\arcsin\left( \alpha+1\right)
\in\left( 0,\frac{\pi}{2}\right) $. Equation $f_{\alpha}^{\prime}(t)=0$ has
two families of solutions:
\begin{equation*}
t_{n}^{\prime}=\left( 2n\pi+\theta\right) ^{2}\text{, }t_{n}^{\prime\prime
}=\left( \left( 2n+1\right) \pi-\theta\right) ^{2}\text{, }n\in \mathbb{N}.
\end{equation*}
Clearly $f_{\alpha}$ is an increasing map on each interval $(t_{n}^{\prime
},t_{n}^{\prime\prime})$, and decreasing on each interval $%
(t_{n}^{\prime\prime},t_{n+1}^{\prime})$. Since
\begin{equation*}
f_{\alpha}(t_{n+1}^{\prime\prime})-f_{\alpha}(t_{n}^{\prime\prime})=4\pi%
\lbrack\cos{\theta}-\sin{\theta}(2\pi-\theta+2n\pi)],
\end{equation*}
the sequence $\left( f_{\alpha}(t_{n}^{\prime\prime})\right) _{n\in \mathbb{N%
}}$ is decreasing for sufficiently large $n$, say $n\geq n_{0}(\alpha)$. Let
us denote $s_{n}=\left( 2n\pi+\frac{\pi}{2}\right)
^{2}\in(t_{n}^{\prime},t_{n}^{\prime\prime})$. It readily follows that for
any $n\geq n_{0}(\alpha)$ the following identity holds:%
\begin{equation*}
\sup\limits_{t\geq s_{n}}E_{\alpha}(t,s_{n})=E_{\alpha}(t_{n}^{\prime\prime
},s_{n})=f_{\alpha}(t_{n}^{\prime\prime})-f_{\alpha}(s_{n})=4n\pi
\varphi(\theta)+\psi(\theta),
\end{equation*}
where
\begin{equation*}
\varphi(\theta)=\cos{\theta}-\sin{\theta}\left( \frac{\pi}{2}-\theta\right)
,
\end{equation*}
and%
\begin{equation*}
\psi(\theta)=2(\pi-\theta)\cos{\theta}+2\sin{\theta}-\sin{\theta}\left(
\frac{\pi}{2}-\theta\right) \left( \frac{3\pi}{2}-\theta\right) -2.
\end{equation*}
As $\varphi$ is a decreasing map on $\left[ 0,\frac{\pi}{2}\right] $, then $%
\varphi(\theta)>\varphi(\frac{\pi}{2})=0$, and this implies $\lim
\limits_{n\rightarrow\infty}E_{\alpha}(t_{n}^{\prime\prime},s_{n})=\infty$.
The map $f_{\alpha}$ possesses an important property:
\begin{equation}
\left\vert t^{\prime}-t^{\prime\prime}\right\vert \leq1\Rightarrow\left\vert
f_{\alpha}(t^{\prime})-f_{\alpha}(t^{\prime\prime})\right\vert =\left\vert
\sin\sqrt{\xi}-(\alpha+1)\right\vert \leq2,   \label{eq3-e}
\end{equation}
with some $\xi$ between $t^{\prime}$ and $t^{\prime\prime}$. Set $p_{n}=%
\left[ s_{n}\right] +1$, and $q_{n}=\left[ t_{n}^{\prime\prime }\right] $.
From the identity%
\begin{equation*}
E_{\alpha}\left( q_{n},p_{n}\right)
=f_{\alpha}(q_{n})-f_{\alpha}(t_{n}^{\prime\prime})+f_{\alpha}(s_{n})-f_{%
\alpha}\left( p_{n}\right) +E_{\alpha}(t_{n}^{\prime\prime},s_{n}),
\end{equation*}
the monotonicity of the function $f_{\alpha}$ implies
\begin{equation*}
E_{\alpha}(t_{n}^{\prime\prime},s_{n})-E_{\alpha}\left( q_{n},p_{n}\right)
=f_{\alpha}(t_{n}^{\prime\prime})-f_{\alpha}(q_{n})+f_{\alpha}\left(
p_{n}\right) -f_{\alpha}(s_{n})\geq0,
\end{equation*}
and by \eqref{eq3-e} we obtain
\begin{equation}
0\leq E_{\alpha}(t_{n}^{\prime\prime},s_{n})-E_{\alpha}\left(
q_{n},p_{n}\right) \leq4.   \label{eq.1-e}
\end{equation}
Setting $\chi\left( \alpha,n\right) =\sup\limits_{m\in\Delta_{n}}E_{\alpha
}\left( m,n\right) $ one has%
\begin{equation*}
E_{\alpha}\left( q_{n},p_{n}\right) \leq\chi\left( \alpha,p_{n}\right) \leq%
\underset{t\geq p_{n}}{\sup}\ E_{\alpha}\left( t,p_{n}\right) \leq\underset{%
t\geq s_{n}}{\sup}\ E_{\alpha}\left( t,s_{n}\right)
=E_{\alpha}(t_{n}^{\prime\prime},s_{n}),
\end{equation*}
therefore inequality \eqref{eq.1-e} yields
\begin{equation}
0\leq E_{\alpha}(t_{n}^{\prime\prime},s_{n})-\chi\left( \alpha,p_{n}\right)
\leq4.   \label{eq2-e}
\end{equation}
It follows now that $\lim\limits_{n\rightarrow\infty}E_{\alpha}(t_{n}^{%
\prime\prime},s_{n})=\infty$ implies $\lim\limits_{n\rightarrow\infty}\chi%
\left( \alpha,p_{n}\right) =\infty$, hence (iii) holds.

(iv) For any fixed $-1<\alpha<\beta<0$ we set
\begin{equation*}
\theta=\arcsin(\alpha+1)\text{, }\zeta=\arcsin(\beta+1)\in\left( 0,\frac{\pi
}{2}\right) .
\end{equation*}
Let us define the real maps
\begin{equation*}
g_{\alpha}\left( s\right) =\underset{t\geq s}{\sup}E_{\alpha}\left(
t,s\right) ,\;g_{\beta}\left( s\right) =\underset{t\geq s}{\sup}E_{\beta
}\left( t,s\right) ,\text{ and }g\left( s\right) =g_{\alpha}\left( s\right)
-g_{\beta}\left( s\right) ,\;s\geq0.
\end{equation*}
Identity
\begin{align*}
g(s_{n}) & =\sup\limits_{t\geq p_{n}}E_{\alpha}(t,s_{n})-\sup\limits_{t\geq
p_{n}}E_{\beta}(t,s_{n})=E_{\alpha}(t_{n}^{\prime\prime},s_{n})-E_{\beta
}(t_{n}^{\prime\prime},s_{n}) \\
& =4n\pi\left( \varphi(\theta)-\varphi(\zeta)\right) +\left( \psi
(\theta)-\psi(\zeta)\right)
\end{align*}
implies $\underset{n\rightarrow\infty}{\lim}g(s_{n})=\infty$. Let us denote $%
\lambda_{n}=\chi\left( \alpha,p_{n}\right) -\chi\left( \beta,p_{n}\right) $.
Inequality \eqref{eq2-e} yields%
\begin{equation*}
\left\vert g(s_{n})-\lambda_{n}\right\vert \leq E_{\alpha}(t_{n}^{\prime
\prime},s_{n})-\chi\left( \alpha,p_{n}\right) +E_{\beta}(t_{n}^{\prime
\prime},s_{n})-\chi\left( \beta,p_{n}\right) \leq8,
\end{equation*}
and $\underset{n\rightarrow\infty}{\lim}g(s_{n})=\infty$ implies $\underset{%
n\rightarrow\infty}{\lim}\lambda_{n}=\infty$. For each $n\in\mathbb{N}$ and $%
\alpha_{0}\in\mathcal{E}_{ad}\left( \mathcal{A}\right) $\ we put $\mu\left(
\alpha_{0},n\right) =\max\left\{ \chi\left( \alpha_{0},n\right) ,\chi\left(
\alpha_{0},p_{n}\right) \right\} $. Let us define the sequence $u=\left(
u_{n}\right) _{n\in\mathbb{N}}$ by
\begin{equation*}
u_{n}=e^{-\frac{1}{2}\lambda_{n}-\mu\left( \beta,n\right) },\;n\in \mathbb{N}%
.
\end{equation*}
Since
\begin{equation*}
\Vert u_{n}\Vert_{n,\beta}=\sup\limits_{m\in\Delta_{n}}e^{-\beta\left(
m-n\right) }\left\Vert A\left( m,n\right) u_{n}\right\Vert =e^{\chi\left(
\beta,n\right) -\frac{1}{2}\lambda_{n}-\mu\left( \beta,n\right) }\leq e^{-%
\frac{1}{2}\lambda_{n}}\rightarrow0,
\end{equation*}
we get $u\in c_{00}(\beta)$, and from
\begin{align*}
\Vert u_{p_{n}}\Vert_{p_{n},\alpha} & =\underset{m\in\Delta_{p_{n}}}{\sup }%
e^{-\alpha\left( m-p_{n}\right) }\left\Vert A\left( m,p_{n}\right)
u_{p_{n}}\right\Vert =e^{\chi\left( \alpha,p_{n}\right) -\frac{1}{2}%
\lambda_{n}-\mu\left( \beta,p_{n}\right) } \\
& =e^{\chi\left( \alpha,p_{n}\right) -\frac{1}{2}\lambda_{n}-\chi\left(
\beta,p_{n}\right) }=e^{\frac{1}{2}\lambda_{n}}\rightarrow\infty,
\end{align*}
we obtain $u\notin c_{00}(\alpha)$, which ends the proof.
\end{proof}

\section*{A final comment}

We anticipate that the techniques and constructions introduced in the
present paper should lead to new characterizations of nonuniform
hyperbolicity of discrete dynamics, following for example the path in \cite%
{Mi.Ra.Sc.1998}.

To act likewise, it is quite necessary to work on the new Banach function
space
\begin{equation*}
c_{0}(\alpha)=\left\{ u=(u_{n})_{n\in\mathbb{N}}\subset E\text{: }%
\lim\limits_{n\rightarrow\infty}\Vert u_{n}\Vert_{n,\alpha}=0\right\}
\end{equation*}
instead of $c_{00}(\alpha)$, and we think that a conclusion as in Theorem
4.3 from \cite{Mi.Ra.Sc.1998} is expected.

\section*{Acknowledgments} 

 We would like to thank to the referees for their useful comments and suggestions meant to improve the quality of our paper.

\appendix

\section{A convergence result}

\label{ap1}

The following lemma gives a convergence result for real series.

\begin{lemma}
\label{l1.A} If $(a_{n})_{n\in\mathbb{N}}$ and $(b_{n})_{n\in\mathbb{N}}$
are two positive real sequences such that the series $\sum\limits_{n=0}^{%
\infty }a_{n}$ converges and $\lim\limits_{n\rightarrow\infty}b_{n}=0$, then
\begin{equation*}
\lim\limits_{n\rightarrow\infty}\sum\limits_{k=0}^{n}a_{n-k}b_{k}=0.
\end{equation*}
\end{lemma}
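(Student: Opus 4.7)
The claim is a standard convolution-to-zero result, and my plan is to split the convolution sum into a "head" where $b_k$ is still large but there are only finitely many terms, and a "tail" where $b_k$ is uniformly small.

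First I would record the two quantities we can control: set $A=\sum_{n=0}^{\infty} a_n<\infty$ and $B=\sup_{n\in\mathbb{N}} b_n<\infty$ (the supremum is finite because $b_n\to 0$ implies $(b_n)$ is bounded). Fix $\varepsilon>0$. Using $b_n\to 0$, choose $N\in\mathbb{N}$ such that $b_k<\varepsilon$ for all $k\geq N$. For $n>N$ write
\begin{equation*}
\sum_{k=0}^{n} a_{n-k}b_k \;=\; \underbrace{\sum_{k=0}^{N-1} a_{n-k}b_k}_{S_1(n)} \;+\; \underbrace{\sum_{k=N}^{n} a_{n-k}b_k}_{S_2(n)}.
\end{equation*}

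For the tail $S_2(n)$, I use the uniform bound on $b_k$ to obtain $S_2(n)\leq \varepsilon\sum_{k=N}^{n} a_{n-k}\leq \varepsilon A$, which is independent of $n$. For the head $S_1(n)$, I use the uniform bound $b_k\leq B$ together with the index change $j=n-k$ to get
\begin{equation*}
S_1(n)\leq B\sum_{k=0}^{N-1} a_{n-k} \;=\; B\sum_{j=n-N+1}^{n} a_j.
\end{equation*}
The right-hand side is a window of $N$ consecutive terms of the convergent series $\sum a_j$, and hence tends to $0$ as $n\to\infty$ (it is bounded above by the tail $\sum_{j\geq n-N+1} a_j\to 0$). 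Therefore there exists $n_0>N$ such that $S_1(n)<\varepsilon$ for all $n\geq n_0$, whence $\sum_{k=0}^n a_{n-k}b_k<\varepsilon(A+1)$ for $n\geq n_0$. Since $\varepsilon>0$ was arbitrary, the desired limit is zero.

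There is no serious obstacle here; the only minor point that needs care is ensuring that the head $S_1(n)$ genuinely vanishes, which follows purely from the Cauchy criterion for the convergent series $\sum a_n$ (the sum of any $N$ consecutive terms starting at position $n-N+1$ tends to $0$). This is the reason $N$ is chosen \emph{before} $n$ is sent to infinity, a standard $\varepsilon/2$-type splitting argument.
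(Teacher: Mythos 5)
Your proof is correct and follows essentially the same route as the paper's: an $\varepsilon$-splitting of the convolution at a fixed index where $b_k$ is small, bounding the tail by $\varepsilon\sum a_n$ and the head by the bound on $(b_n)$ times a vanishing block of the convergent series via the Cauchy criterion. No issues.
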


\begin{proof}
For each $n\in\mathbb{N}$, set $x_{n}=\sum\limits_{k=0}^{n}a_{n-k}b_{k}$, $%
S_{n}=\sum\limits_{k=0}^{n}a_{k}$, and let $S=\sum\limits_{k=0}^{\infty}a_{k}
$. As $\lim\limits_{n\rightarrow\infty}b_{n}=0$, then $b_{n}\leq K$ for some
$K>0.$ For $\varepsilon>0$ there exits $n_{1}=n_{1}(\varepsilon )\in\mathbb{N%
}$ such that
\begin{equation*}
\label{eq.l2}b_{n}<\dfrac{\varepsilon}{2S},\text{ for all }n\geq n_{1}.
\end{equation*}
The convergence of the series implies the existence of some $%
n_{2}=n_{2}(\varepsilon)\in\mathbb{N}$ such that for all $n\geq n_{2}$ and $%
p\in\mathbb{N}^{\ast}$ one has
\begin{equation*}
\label{eq.l3}S_{n+p}-S_{n}<\dfrac{\varepsilon}{2K}.
\end{equation*}
Let $n_{0}=\max\{n_{1},n_{2}\}$ and choose $n\geq2n_{0}$. Estimation
\begin{align*}
x_{n} & =\left( a_{n}b_{0}+\cdots+a_{n-(n_{0}-1)}b_{n_{0}-1}\right) +\left(
a_{n-n_{0}}b_{n_{0}}+\cdots+a_{0}b_{n}\right) \\
& \leq K\left( a_{n-n_{0}+1}+\cdots+a_{n}\right) +\dfrac{\varepsilon}{2S}%
\left( a_{0}+\cdots+a_{n-n_{0}}\right) \\
& =K\left( S_{n}-S_{n-n_{0}}\right) +\dfrac{\varepsilon}{2S}\sum
\limits_{k=0}^{n-n_{0}}a_{k} \\
& <\dfrac{\varepsilon}{2}+\dfrac{\varepsilon}{2}=\varepsilon
\end{align*}
shows that $x_{n}\rightarrow0$.
\end{proof}

\section{A consequence of Faulhaber's formula}

\label{ap.2}

Faulhaber's formula expresses the sum of the $p$th powers of the first $n$
positive integers as a $(p+1)$th-degree polynomial function of $n$.
Precisely, for all $n\in\mathbb{N}^{\ast}$ and $p\in\mathbb{N}$, the
following identity holds:
\begin{equation*}
\label{eq.F}\sum\limits_{k=1}^{n}k^{p}=\dfrac{1}{p+1}\sum%
\limits_{j=0}^{p}(-1)^{j}%
\begin{pmatrix}
p+1 \\
j \\
\end{pmatrix}
B_{j}n^{p+1-j},
\end{equation*}
where $B_{j}$ denotes the $j$th Bernoulli number with the convention $%
B_{1}=-1/2$. Since $B_{0}=1$, one has
\begin{equation}
\sum\limits_{k=1}^{n}k^{p}\geq\dfrac{n^{p+1}}{p+1},\;n\in\mathbb{N}^{\ast
},\;p\in\mathbb{N}.   \label{eq.c}
\end{equation}

\end{document}